\newtheorem{theorem}{Theorem}
\newtheorem{lemma}[theorem]{Lemma}
\newtheorem{cor}[theorem]{Corollary}
\theoremstyle{definition}
\newtheorem{constr}[theorem]{Construction}
\def\sref#1{Section~$\ref{#1}$}
\def\lref#1{Lemma~$\ref{#1}$}
\def\tref#1{Theorem~$\ref{#1}$}
\def\Tref#1{Table~$\ref{#1}$}
\def\fref#1{Figure~$\ref{#1}$}
\def\cref#1{Corollary~$\ref{#1}$}
\def\conref#1{Construction~$\ref{#1}$}
\newcommand{\sym}{\mathcal{S}}
\renewcommand{\geq}{\geqslant}
\renewcommand{\leq}{\leqslant}
\title{Excess Coverage Arrays and Levenshtein's Conjecture}
\author{Amber E. Gentle, Daniel Horsley and Ian M. Wanless\\
\small School of Mathematics\\[-0.5ex]
\small Monash University\\[-0.5ex]
\small Vic 3800, Australia\\
\small\tt \{amber.gentle, daniel.horsley, ian.wanless\}@monash.edu}
\date{}
\begin{document}
\maketitle

\begin{abstract}

A sequence covering array, denoted \textsf{SCA}$(N;t,v)$, is a set of $N$ permutations of $\{0, \dots, v-1 \}$ such that each sequence of $t$ distinct elements of $\{0, \dots, v-1\}$ reads left to right in at least one permutation. The minimum number of permutations such a sequence covering array can have is $t!$ and Levenshtein conjectured that if a sequence covering array with $t!$ permutations exists, then $v \in \{t,t+1\}$. In this paper, we prove that if an \textsf{SCA}$(7!;7,v)$ exists, then $v \leq 9$. We do this by analysing connections between sequence covering arrays and a special kind of covering array called an excess coverage array. A strength 2 excess coverage array, denoted \textsf{CA}$_{X}(N;2,k,v)$, is an $N \times k$ array with entries from $\{0, \dots, v - 1\}$ such that every ordered pair of distinct symbols appear at least once in each pair of columns and all other pairs appear at least twice. We demonstrate computationally that there is a unique \textsf{CA}$_{X}(42;2,5,6)$, and we prove that this array alone does not satisfy necessary conditions we establish for the existence of an \textsf{SCA}$(7!;7,10)$. Furthermore, we find the maximum possible number of columns for strength 2 excess coverage arrays with symbol sets of sizes between 2 and 6, and more broadly investigate binary excess coverage arrays.

\end{abstract}

\section*{Keywords}
Sequence covering array, covering array, orthogonal array, exact covering

\section*{Mathematics Subject Classification}
05B30, 05B40, 05B15

\section{Introduction}\label{intro3}

Let $v$ and $t$ be positive integers with $v \geqslant t$. We let $[v] = \{ 0, \dots, v-1 \}$. We then let $\mathcal{S}_{v}$ be the set of permutations of $[v]$. Unless stated otherwise, permutations are assumed to be written in one-line notation with $\pi \in \mathcal{S}_{v}$ being denoted by $\pi(0) \pi(1) \cdots \pi(v-1)$. We let $\mathcal{S}_{v, t}$ be the set of ordered sequences of $t$ distinct elements of $[v]$. Unless stated otherwise, we write a sequence $s \in \mathcal{S}_{v,t}$ as $(s_{0},s_{1}, \dots, s_{t-1})$. A permutation $\pi \in \mathcal{S}_{v}$ \textit{covers} $s \in \mathcal{S}_{v,t}$ if $\pi^{-1}(s_{i}) < \pi^{-1}(s_{i+1})$ for $0 \leqslant i \leqslant t-2$.

A \textit{sequence covering array} with parameters $N$, $v$ and $t$, denoted by \textsf{SCA}$(N;t,v)$, is a set $X$ of $N$ permutations in $\mathcal{S}_{v}$ such that for each sequence $s \in \mathcal{S}_{v,t}$, there exists $\pi \in X$ such that $\pi$ covers $s$. We refer to $N$ as the \textit{size}, $v$ as the \textit{order}, and $t$ as the \textit{strength} of an \textsf{SCA}$(N;t,v)$. Sequence covering arrays were first studied by Spencer~\cite{Spen71} in the 1970s and have gained recent attention for their applications in software testing~\cite{BTI12, Brain12, Kuhn12, MC15, TJ22}. As such, the goal of research is often to find sequence covering arrays with as few permutations as possible. For given $v$ and $t$, we let \textsf{SCAN}$(t,v)$ be the smallest $N$ such that there exists an \textsf{SCA}$(N;t,v)$.

Given a $t$-subset $T \subseteq [v]$, there are $t!$ ways of arranging the elements of $T$. In order to cover all of these arrangements, a sequence covering array requires at least $t!$ permutations. Moreover, in an \textsf{SCA}$(t!;t,v)$, every sequence must be covered exactly once. The problem of determining when \textsf{SCAN}$(t,v) = t!$ has received particular attention in part because of its connections to coding theory~\cite{Lev91}. Any permutation in $\sym_{v}$ is trivially an \textsf{SCA}$(1; 1,v)$. Any permutation in $\sym_{v}$ and its reverse form an \textsf{SCA}$(2; 2, v)$, and the set $\sym_{t}$ forms an \textsf{SCA}$(t!; t, t)$. Hence we need only consider this problem for $3 \leq t \leq v - 1$. Levenshtein proved by construction that an \textsf{SCA}$(t!; t, t+1)$ exists for $t \geq 2$~\cite{Lev91} and conjectured that for $t \geq 3$, an \textsf{SCA}$(t!;t,v)$ exists if and only if $v \in \{t, t+1 \}$~\cite{Levconj}. This conjecture was later revised by Mathon and Tran van Trung~\cite{Math99} as they had found an \textsf{SCA}$(4!; 4, 6)$. However, this is the only known counter-example to Levenshtein's conjecture and the conjecture has otherwise been confirmed for $t \leq 6$~\cite{Math99}. Moreover, it is known that an \textsf{SCA}$(4!; 4, 7)$ does not exist~\cite{Kle04}. In this paper, we prove the following theorem.







\begin{theorem}
If an \textup{\textsf{SCA}}$(7!;7,v)$ exists, then $v \in \{7,8,9\}$. \label{t:sca79}
\end{theorem}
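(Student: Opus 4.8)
The plan is to reduce \tref{t:sca79} to the single statement that no \textsf{SCA}$(7!;7,10)$ exists, and then to rule that out using the excess coverage array machinery. The reduction rests on the observation that if an \textsf{SCA}$(t!;t,v)$ with $v\geq t+1$ exists then so does an \textsf{SCA}$(t!;t,v-1)$. Indeed, given such an array $X$, delete the symbol $v-1$ from every permutation of $X$; since deletion preserves the relative order of the remaining symbols, a sequence $s\in\mathcal{S}_{v-1,t}$ is covered by $\pi\in X$ if and only if it is covered by the corresponding reduced permutation. As $|X|=t!$, every $t$-sequence is covered exactly once by $X$, so every $s\in\mathcal{S}_{v-1,t}$ is covered exactly once by the reduced multiset; and two equal reduced permutations would doubly cover some $t$-sequence (one exists since $v-1\geq t$), so the reduced permutations are distinct and form an \textsf{SCA}$(t!;t,v-1)$. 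Iterating, an \textsf{SCA}$(7!;7,v)$ for any $v\geq 10$ produces an \textsf{SCA}$(7!;7,10)$; since moreover $v\geq t=7$ always holds, \tref{t:sca79} follows once we show that no \textsf{SCA}$(7!;7,10)$ exists.

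So assume $X$ is an \textsf{SCA}$(7!;7,10)$. First I would prove a regularity statement: for $0\leq j\leq 7$, every $j$-sequence is covered by exactly $7!/j!$ permutations of $X$. This follows by downward induction from the fact that every $7$-sequence is covered exactly once: if every $(j{+}1)$-sequence is covered $c_{j+1}$ times, then, fixing a $j$-sequence $s$, the $(10-j)(j+1)$ sequences obtained by inserting one further symbol into $s$ are each covered $c_{j+1}$ times, each is covered only by permutations that also cover $s$, and each permutation covering $s$ covers exactly $10-j$ of them (one per inserted symbol), whence $s$ is covered $(j+1)c_{j+1}$ times. In particular every $5$-sequence is covered by exactly $42$ permutations. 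Now fix a $5$-sequence $s=(s_0,\dots,s_4)$, write $P(s)$ for the set of $42$ permutations of $X$ covering it, and put $E=[10]\setminus\{s_0,\dots,s_4\}$, so $|E|=5$. For $\pi\in P(s)$ and $x\in E$ let the \emph{gap} of $x$ in $\pi$ be the index $g\in\{0,\dots,5\}$ of the block of $\pi$ between consecutive terms of $s$ — block $0$ lying before $s_0$ and block $5$ after $s_4$ — in which $x$ appears, and let $M_s$ be the $42\times 5$ array with columns indexed by $E$ whose $(\pi,x)$ entry is this gap. Given two columns $x\neq y$ and two values $a\neq b$, the conditions ``$x$ in gap $a$'' and ``$y$ in gap $b$'' pin down a unique $7$-sequence containing $s_0,\dots,s_4$ in order, which is covered exactly once by $X$, so the ordered pair $(a,b)$ occurs exactly once in columns $x,y$; and if $a=b$ the two relative orders of $x$ and $y$ inside gap $a$ give two distinct $7$-sequences, so $(a,a)$ occurs exactly twice. (As a check, $6\cdot 5\cdot 1+6\cdot 2=42$.) Hence $M_s$ is a \textsf{CA}$_X(42;2,5,6)$. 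It also carries more structure than this: fixing the value in any single column partitions the $42$ rows into six classes of seven, each class being $P(s')$ for the $6$-sequence $s'$ obtained by inserting that column's symbol into $s$ at the relevant gap, and the remaining columns then record gap data relative to $s'$. Such refinements, taken together, amount to extra necessary conditions on $M_s$.

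The final step is to combine the computational classification — which shows that up to the obvious relabellings there is a \emph{unique} \textsf{CA}$_X(42;2,5,6)$ — with these necessary conditions. Since $M_s$ must be a copy of the unique \textsf{CA}$_X(42;2,5,6)$, it suffices to verify that this array cannot be endowed with the extra structure forced above; for instance, that it cannot be lifted consistently to the full within-gap information, equivalently that the pattern of $6$-sequence coverings it would have to support is not realisable. That contradiction finishes the proof. I expect the main obstacle to be precisely this last step: extracting necessary conditions that are at once genuinely forced by an \textsf{SCA}$(7!;7,10)$ and strong enough to be violated by the unique \textsf{CA}$_X(42;2,5,6)$, together with the computation underpinning that uniqueness. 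By contrast the reduction and the regularity count are routine, and the essential difficulty is the bookkeeping that ties $M_s$ back to the exact-covering structure of $X$.
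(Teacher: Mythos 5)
Your reduction to $v=10$ by symbol deletion is sound (the paper instead handles $v\geq 11$ via the computational fact that no \textsf{CA}$_X(42;2,6,6)$ exists, but your route works equally well), and your construction of the array $M_s$ from the $42$ permutations covering a fixed $5$-sequence is exactly the paper's \conref{construction} with $a=5$; your verification that $M_s$ is a \textsf{CA}$_X(42;2,5,6)$ matches \lref{t:icax}. The gap is the final step, which you explicitly leave open: you propose to find ``extra necessary conditions'' on a \emph{single} array $M_s$ (refinements by $6$-sequence coverings, within-gap liftings) and hope the unique \textsf{CA}$_X(42;2,5,6)$ violates one of them. You never identify such a condition, and the one refinement you do state --- that fixing a value in one column splits the $42$ rows into six classes of seven --- is automatically satisfied by any \textsf{CA}$_X(42;2,5,6)$ with exact coverage, so it cannot produce a contradiction.

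The paper's actual contradiction is not a condition on a single $M_s$ but a global count over all $10!/5!$ arrays $M_s$, one for each ordered $5$-sequence $s$. This is \lref{t:mult}: for each multiplicity vector $(m_0,\dots,m_5)$ summing to $5$, exactly $7!$ rows across all these arrays have that multiplicity vector, because each permutation $\pi\in X$ contributes exactly one such row (the vector determines five positions of $\pi$; take $A$ to be the symbols there, in the order they appear, and $s$ the remaining symbols in order). Applied to the vector $(5,0,\dots,0)$, this forces $7!$ constant rows of $0$s somewhere among the arrays $M_s$; but every $M_s$ is isomorphic to the unique \textsf{CA}$_X(42;2,5,6)$, which has no constant row, and isomorphism preserves constant rows. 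Without this aggregate counting argument, or a concrete substitute for it, your proof does not close; indeed the paper remarks that the mere existence of a \textsf{CA}$_X(42;2,5,6)$ prevents any conclusion about \textsf{SCA}$(7!;7,10)$ from the single-array statement \tref{cax} alone.
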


This reduces the best known upper bound on order of an \textsf{SCA}$(7!;7,v)$ from 13 to 9. It is unknown whether an \textsf{SCA}$(7!;7,9)$ exists.

We prove \tref{t:sca79} by analysing a special kind of covering array called an excess coverage array introduced by Chee et al.~\cite{Chee13}. Let $\textsf{C}$ be an $N \times k$ array where each entry in $\textsf{C}$ is a symbol from the alphabet $[v]$. A \textit{$t$-way interaction} is a set of $t$ pairs $\{ (c_i, \nu_i) : 0 \leq i \leq t-1 \}$ where each $c_i$ is a column of $\textsf{C}$ such that $c_i \neq c_j$ for $i \neq j$, and each $\nu_i$ is an element of $[v]$. The row $\rho$ of $C$ \textit{covers} the interaction $\{ (c_i, \nu_i) : 0 \leq i \leq t-1 \}$ if the entry of $\textsf{C}$ in row $\rho$ and column $c_i$ is $\nu_i$ for $0 \leq i \leq t-1$. The array $\textsf{C}$ is a \textit{covering array of strength $t$}, denoted by $\textsf{CA}(N; t,k,v)$, if for each $t$-way interaction $T$, there is some row of $\textsf{C}$ that covers $T$.

For an interaction $T = \{ (c_i, \nu_i) : 0 \leq i \leq t-1 \}$, and for $\sigma \in [v]$, let $\tau_{\sigma}(T) =  \{ c_i : \nu_i = \sigma \} $. Then, let $\mu(T) = \prod_{\sigma = 0}^{v - 1} \vert \tau_{\sigma}(T) \vert !$. Then $\textsf{C}$ is an \textit{excess coverage array}, denoted by $\textsf{CA}_{X}(N; t,k,v)$, if each $t$-way interaction $T$ is covered by at least $\mu(T)$ different rows of $C$. We consider a \textsf{CA}$_{X}(N;t,k,v)$ as a multiset of rows where each row is a vector in $[v]^{k}$. In particular, reordering the rows of a \textsf{CA}$_{X}(N;t,k,v)$ does not generate a different array.

Excess coverage arrays were introduced by Chee et al.~\cite{Chee13} as a means of drawing connections between sequence covering arrays and covering arrays. They describe how to construct excess coverage arrays from sequence covering arrays and, in particular, show that the existence of an \textsf{SCA}$(t!;t,v)$ implies the existence of a \textsf{CA}$_{X}(t(t-1); 2, v - t + 2, t - 1)$. 


We perform computations to find the maximum number of columns $k$ for which a \textsf{CA}$_{X}(v(v+1);2,k,v)$ exists. We also count the number of isomorphism classes of \textsf{CA}$_{X}(v(v+1);2,k,v)$ for each possible pair $(k,v)$ for $2 \leq v \leq 6$. Two excess coverage arrays are isomorphic if one can be obtained from the other by permuting the symbols and permuting the columns of the array. A row of an excess coverage array is \textit{constant} if it contains only one symbol. Our computations prove the following result. 

\begin{theorem}
\label{unique} Up to isomorphism, there is a unique \textsf{CA}$_{X}(42; 2, 5, 6)$. It contains no constant rows.
\end{theorem}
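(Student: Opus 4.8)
Since the statement is a finite classification, the natural approach is an exhaustive computer search, and the first task is to establish that the objects being classified are extremely rigid, which is what keeps such a search feasible. In any two columns of a $\textsf{CA}_X(42;2,5,6)$ the $42$ rows induce $42$ ordered pairs from $[6]^2$; the definition forces each of the $30$ pairs of distinct symbols to occur at least once and each of the $6$ constant pairs to occur at least twice, and $30+2\cdot6=42$. Hence in every pair of columns each pair of distinct symbols occurs \emph{exactly} once and each constant pair occurs \emph{exactly} twice, and in particular every column contains each symbol exactly $5+2=7$ times. Equivalently, a $\textsf{CA}_X(42;2,5,6)$ is a system of five partitions of a $42$-element set into six blocks of size $7$ such that any two of them meet in the pattern: six matched pairs of blocks share two points each and the remaining thirty pairs share one point each.

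The next step is to normalise using row, column and symbol permutations. Viewing the array as a multiset of rows, sort the rows by their entry in column $0$, so that column $0$ reads $0^71^7\cdots5^7$; then, within each block of seven rows sharing a value in column $0$, column $1$ carries one symbol twice and the remaining five once, so sort those seven rows by their column-$1$ entry. This pins down columns $0$ and $1$ entirely, and every isomorphism class of $\textsf{CA}_X(42;2,5,6)$ has a representative of this shape. The only symmetry still acting on such representatives comes from the $6!$ relabellings of the symbols (each followed by re-sorting the rows) and from permuting columns; rather than quotient by this during the search, I would enumerate all representatives and remove duplicates at the end.

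The core computation is a backtracking search that extends the normalised prefix by filling column $2$, then column $3$, then column $4$, entry by entry (or a block of seven rows at a time). Throughout one maintains, for each pair of already-populated columns, the multiset of ordered symbol pairs seen so far, pruning the moment a distinct pair would repeat, a constant pair would occur a third time, or a partial column could not be completed so as to contain each symbol seven times; by the rigidity above these constraints are very tight, so the search tree stays small. Each completed array is reduced to a canonical form under the full isomorphism group (symbol and column permutations) and duplicates are discarded; the assertion to verify is that exactly one canonical form remains. One then inspects that array and checks that none of its $42$ rows is constant, which gives the final sentence of the theorem.

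The main obstacle is not the running time --- the rigidity makes the search modest --- but establishing that the computation is sound and complete: one must be sure the pruning rules never discard a genuine solution, that the isomorphism reduction is correct so that ``exactly one class'' is trustworthy, and, ideally, corroborate the result with an independent implementation or with an exact-cover or SAT reformulation of the same constraints. The rigidity analysis above is precisely what turns an a priori huge problem into a routine, checkable finite computation.
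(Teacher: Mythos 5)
Your proposal is correct and follows essentially the same route as the paper: both exploit the exact coverage counts (each non-constant pair once, each constant pair twice, hence $42$ rows and each symbol seven times per column), start from the forced two-column array, exhaustively extend one column at a time, and finish with isomorph rejection under symbol and column permutations. The only difference is organisational --- the paper builds each new column by choosing, for every symbol, a ``placement'' of seven rows and requiring these placements to partition the row set, whereas you fill the new column by entry-level backtracking; these are equivalent formulations of the same search.
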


This unique \textsf{CA}$_{X}(42;2,5,6)$ is shown in \fref{fig:design}. We will extend the work of Chee et al.~\cite{Chee13} to show that the existence of an \textsf{SCA}$(t!;t,v)$ implies that every possible multiset of $k$ elements of $[v]$ must appear in a row of some \textsf{CA}$_{X}(t(t-1); 2, v - t + 2, t - 1)$. Thus, the fact that no \textsf{CA}$_{X}(42;2,5,6)$ contains a constant row implies \tref{t:sca79}. 

In an \textsf{SCA}$(t!;t,v)$, every sequence of length $t$ is covered exactly once. A \textit{perfect sequence covering array} with order $v$, strength $t$ and multiplicity $\lambda$, denoted by \textsf{PSCA}$(v,t,\lambda)$, is a multiset of permutations in $\sym_{v}$ such that every sequence in $\sym_{v,t}$ is covered by exactly $\lambda$ permutations. Then, an \textsf{SCA}$(t!;t,v)$ is also a \textsf{PSCA}$(v,t,1)$. Perfect sequence covering arrays were introduced by Yuster~\cite{Yus19} in 2020 as a design theoretical variant of sequence covering arrays. For more on perfect sequence covering arrays, see~\cite{Ge23, GeWa22, Iur22, NJL23, Yus19}.

In \sref{paperexcess}, we consider excess coverage arrays as introduced by Chee et al.~\cite{Chee13}. We perform our own general analysis of these objects as they pertain to sequence covering arrays, including deriving necessary conditions for the existence of sequence covering arrays. In \sref{ca2}, we focus specifically on excess coverage arrays of strength 2 and describe computations that prove \tref{unique}. In \sref{oa2}, we compare excess coverage arrays of strength 2 and orthogonal arrays of strength 2. In \sref{binary}, we discuss excess coverage arrays whose symbol set contains 2 elements. Finally, we conclude in \sref{concl} with some open problems.

\section{Covering arrays with excess coverage}\label{paperexcess}

In this section, we discuss the connections between sequence covering arrays and excess coverage arrays established by Chee et al.~\cite{Chee13}. We extend these results in \lref{t:icax}. We then use this lemma to prove \lref{t:mult}, which in turn allows us to deduce \tref{t:sca79} from \tref{unique}. Let \textsf{CAN}$(t,k,v)$ be the smallest $N$ for which a \textsf{CA}$(N;t,k,v)$ exists and let \textsf{CAN}$_{X}(t,k,v)$ be the smallest $N$ for which a \textsf{CA}$_{X}(N;t,k,v)$ exists. 

\begin{constr}\label{construction}
Let $X$ be an \textsf{SCA}$(N;t,v)$ and let $A$ be a set of $a$ symbols of $[v]$ with $0 < a < t$. Let $u$ be some permutation of the symbols of $A$, let $X_{u}$ be the set of permutations in $X$ that cover $u$, and let $n = \vert X_{u} \vert$. Now, let $\textsf{C}$ be an $n \times (v - a)$ array with rows indexed by the permutations in $X_{u}$ and columns indexed by the symbols in $[v] \setminus A$. The entry of row $\rho$ and column $\nu$ of $\textsf{C}$ is the number of symbols of $A$ that precede $\nu$ in the permutation $\rho$.
\end{constr}

Note that \conref{construction} can be performed for any choice of $A$ and then for any choice of $u$. Hence, we can build $\binom{v}{a}a! = v!/(v - a)!$ arrays in this way. We let $\mathcal{C}_{a}(X)$ be the multiset of all the arrays that can be built through the removal of $a$ symbols from $X$ via \conref{construction}. Chee et al.~\cite{Chee13} prove that each array $\textsf{C} \in \mathcal{C}_{a}(X)$ is a \textsf{CA}$_{X}( \vert \textsf{C} \vert; 2, v - t + 2, t - 1)$. The proof is short, and we provide it here for completeness with a small extension in the case where $X$ is an \textsf{SCA}$(t!;t,v)$

\begin{lemma}\label{t:icax}
Let $X$ be an \textup{\textsf{SCA}}$(N;t,v)$  and let $0 < a < t$. Then every array $\textup{\textsf{C}} \in \mathcal{C}_{a}(X)$ is a \textup{\textsf{CA}}$_{X}( \vert \textup{\textsf{C}} \vert; 2, v - t + 2, t - 1)$. Moreover, if $X$ is an \textup{\textsf{SCA}}$(t!; t,v)$, then each $(t-a)$-way interaction $T$ must be covered by exactly $\mu(T)$ rows of $\textup{\textsf{C}}$.
\end{lemma}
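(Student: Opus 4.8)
The plan is to first understand what Construction~\ref{construction} produces combinatorially, and then count coverage in two different ways. Fix $X$, a $t$-subset $A$ of size $a$, a permutation $u$ of $A$, and let $\textsf{C} \in \mathcal{C}_a(X)$ be the resulting $n \times (v-a)$ array with $n = |X_u|$. The entry in row $\rho$, column $\nu$ records how many symbols of $A$ precede $\nu$ in $\rho$; since $\rho$ covers $u$ (the unique arrangement of $A$), this entry lies in $\{0,1,\dots,a\} = [a+1]$, so $\textsf{C}$ has symbol set of size $a+1 = t-1$ when $a = t-2$, matching the claimed parameter $t-1$, and more generally the symbol set is $[t-1]$ only in the case $a = t-2$. (I should double-check the indexing: Chee et al.'s statement is phrased for $a = t-2$, but Lemma~\ref{t:icax} as stated allows general $a$ with the conclusion always being a $\textsf{CA}_X(\cdot;2,v-t+2,t-1)$; so I will take $a = t-2$ throughout, which is the only case making $v-a = v-t+2$ and $a+1 = t-1$ consistent.)

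Next, the key translation: I would show that for two distinct columns $\nu_1, \nu_2 \in [v]\setminus A$ and a target pair $(j_1, j_2) \in [t-1]^2$, a row $\rho$ of $\textsf{C}$ covers the $2$-way interaction $\{(\nu_1,j_1),(\nu_2,j_2)\}$ precisely when $\rho$ is a permutation covering $u$ in which exactly $j_1$ symbols of $A$ precede $\nu_1$ and exactly $j_2$ precede $\nu_2$. Such a $\rho$ determines, and is determined (as far as the relative order of $A \cup \{\nu_1,\nu_2\}$ goes) by, an interleaving of the chain $u$ with $\nu_1$ and $\nu_2$; there are exactly $\mu(\{(\nu_1,j_1),(\nu_2,j_2)\})$ distinct orderings of $A \cup \{\nu_1,\nu_2\}$ consistent with $u$ and with those "precede" counts — this is exactly the product of factorials counting how many ways $\nu_1$ and $\nu_2$ can be placed among themselves when they land in the same gap of $u$ (the factorial is $2!$ when $j_1 = j_2$, i.e.\ both in the same gap, and $1$ otherwise, and this is precisely $\mu(T)$ for a $2$-way interaction). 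Because $X$ is an \textsf{SCA} of strength $t = a+2$, every ordering of the $t$-set $A \cup \{\nu_1,\nu_2\}$ is covered by \emph{at least} one permutation of $X$, so each of these $\mu(T)$ orderings contributes at least one row to $\textsf{C}$ covering $T$; hence $T$ is covered at least $\mu(T)$ times, proving $\textsf{C}$ is a $\textsf{CA}_X$.

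For the "moreover" part, I would run the same correspondence but now invoke that an \textsf{SCA}$(t!;t,v)$ covers every $t$-sequence \emph{exactly} once. Then each of the $\mu(T)$ orderings of $A\cup\{\nu_1,\nu_2\}$ extending $u$ with the prescribed counts is covered by exactly one permutation of $X$, and moreover that permutation automatically covers $u$ (so lies in $X_u$ and yields a genuine row of $\textsf{C}$). Distinct orderings come from distinct permutations (a permutation induces a single ordering on any fixed $t$-set), and conversely every row of $\textsf{C}$ covering $T$ arises this way. Therefore the number of rows of $\textsf{C}$ covering $T$ equals the number of such orderings, which is exactly $\mu(T)$. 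I would then remark that the same argument applies verbatim to a general $(t-a)$-way interaction — replacing the pair $\{\nu_1,\nu_2\}$ by an arbitrary set of $t-a$ distinct columns and the target by an arbitrary function into $[a+1]$ — giving the stated exact-coverage conclusion for every $(t-a)$-way interaction $T$, with $\mu(T)$ computed in the symbol set $[a+1]$.

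The main obstacle is getting the bijection airtight: I must verify carefully that (i) the map from permutations $\rho \in X_u$ covering interaction $T$ to orderings of $A \cup \{\nu_i\}$ is well-defined and surjective onto exactly the set of $u$-compatible orderings with the right counts, (ii) its fibres are singletons in the exact-coverage case, and (iii) the count of $u$-compatible orderings with prescribed "precede" multiplicities is exactly $\mu(T) = \prod_\sigma |\tau_\sigma(T)|!$ — i.e.\ that placing the columns assigned symbol $\sigma$ into gap $\sigma$ of $u$ can be done in $|\tau_\sigma(T)|!$ internal orders and these choices are independent across $\sigma$. Once (iii) is checked (it is a short gap-counting argument), the rest is bookkeeping, and the inequality version for the plain \textsf{SCA} follows by dropping "exactly" to "at least".
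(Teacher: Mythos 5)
Your proposal is correct and follows essentially the same route as the paper: both translate a row of $\textsf{C}$ covering an interaction $T$ into the ordering it induces on $A\cup D$, count the orderings compatible with $u$ and the prescribed gap placements as $\mu(T)=\prod_\sigma|\tau_\sigma(T)|!$, and then invoke the (exact) covering property of the \textsf{SCA}, noting that a single permutation induces only one ordering of a fixed $t$-set. Your observation that the stated parameters $(v-t+2,\,t-1)$ only match the construction when $a=t-2$, while the ``moreover'' clause is about general $(t-a)$-way interactions, is a fair reading of a genuine looseness in the lemma's statement and does not affect the argument.
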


\begin{proof}
Let $X$ be an \textsf{SCA}$(N;t,v)$ and let $\textsf{C} \in \mathcal{C}_{a}(X)$. Then the rows of $\textsf{C}$ correspond to the permutations in $X$ that cover some permutation $(x_{0}, \dots, x_{a-1})$ of the elements of some symbol set $A$. Let $D = \{y_{0}, \dots, y_{t - a - 1} \}$ be a subset of $[v]$ that is disjoint from $A$ and let $T = \{ ( y_{i}, \nu_{i} ) : 0 \leq i \leq t - a - 1 \}$ be a $(t-a)$-way interaction where each $\nu_{i}$ is an element of $[a+1]$. Consider the permutations of $A \cup D$ that cover $(x_{0}, \dots, x_{a-1})$ and have the elements of $\tau_{0}(T)$ preceding $x_{0}$, the elements of $\tau_{a}(T)$ succeeding $x_{a-1}$ and the elements of $\tau_{i}(T)$ lying between $x_{i-1}$ and $x_{i}$ for $1 \leq i \leq a-1$. There are $\mu(T)$ such permutations. Each of these corresponds to a sequence in $\mathcal{S}_{v,t}$ which must be covered by at least one permutation in $X$. Moreover, a row of $\textsf{C}$ covers $T$ if and only if the corresponding permutation in $X$ covers one of these sequences. Therefore, there are at least $\mu(T)$ rows of $\textsf{C}$ that cover $T$.

If $X$ is an \textsf{SCA}$(t!;t,v)$, then each sequence of length $t$ is covered by exactly one permutation in $X$. Using the above argument, it can be seen that there are exactly $\mu(T)$ rows of \textsf{C} that cover $T$.
\end{proof}

Suppose $X$ is an \textsf{SCA}$(t!;t,v)$. Then every sequence of length $t$ is covered by exactly one permutation in $X$. Therefore, every sequence of length $t - a$ is covered by exactly $t!/a!$ permutations in $X$ for $0 < a < t$. Thus, every array in $\mathcal{C}_{a}(X)$ has $t!/a!$ rows. \lref{t:icax} implies the following theorem.

\begin{theorem}[\cite{Chee13}]
Let $v,t$ and $a$ be integers such that $v \geqslant t \geqslant 3$ and $0 < a < t$. Then \textup{\textsf{SCAN}}$(t,v) \geqslant a!\,\textup{\textsf{CAN}}_{X}(t - a, v - a, a + 1)$. \label{cax}
\end{theorem}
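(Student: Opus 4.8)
The plan is to derive Theorem~\ref{cax} directly from \lref{t:icax} and the counting observation made just before the statement. Suppose $X$ is an \textsf{SCA}$(N;t,v)$ realising the minimum, so $N = \textsf{SCAN}(t,v)$. Applying \conref{construction} with a symbol set $A$ of size $a$ and a fixed permutation $u$ of $A$ produces an array $\textsf{C}\in\mathcal{C}_a(X)$ whose rows are exactly the permutations of $X$ that cover $u$. By \lref{t:icax}, $\textsf{C}$ is a \textsf{CA}$_X(|\textsf{C}|;2,v-t+2,t-1)$; rewriting the parameters via $t-a$ in place of $t$ (so that $a+1$ plays the role of the alphabet size and $v-a$ the role of the order), this says $\textsf{C}$ is a \textsf{CA}$_X(|\textsf{C}|; 2, (v-a)-(t-a)+2, (t-a)-1) = \textsf{CA}_X(|\textsf{C}|;2,v-t+2,t-1)$ — the same object, just viewed as arising from an \textsf{SCA} of strength $t-a$ and order $v-a$ on the alphabet $[a+1]$ after the relabelling of Construction~\ref{construction}. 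Hence $|\textsf{C}|\geqslant \textsf{CAN}_X(t-a,v-a,a+1)$.

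Next I would pin down $|\textsf{C}|$ exactly. The number of rows of $\textsf{C}$ is the number of permutations of $X$ that cover the fixed length-$a$ sequence $u$. I would run the averaging argument: summing over all $a!$ orderings of $A$, every permutation of $X$ covers exactly one of them (the orderings of $A$ partition according to the relative order in which the elements of $A$ appear), so the total count over all choices of $u$ is $N$. But the minimum-$N$ hypothesis is not what gives equality of the counts for different $u$; instead I would use that an \textsf{SCA}$(t!;t,v)$ covers every $t$-set exactly once, hence every $(t-a)$-set exactly $t!/a!$ times. Wait — for the lower-bound direction I do not need an \textsf{SCA}$(t!;t,v)$; I need a bound valid for the extremal \textsf{SCA}. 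The cleanest route: pick $u$ to be an ordering of $A$ covered by the \emph{fewest} permutations of $X$; then $|\textsf{C}| \leqslant N/a!$ by the pigeonhole/averaging bound above, while simultaneously $|\textsf{C}|\geqslant \textsf{CAN}_X(t-a,v-a,a+1)$ by \lref{t:icax}. Combining, $\textsf{CAN}_X(t-a,v-a,a+1)\leqslant N/a!$, i.e. $N\geqslant a!\,\textsf{CAN}_X(t-a,v-a,a+1)$, which is exactly the claimed inequality $\textsf{SCAN}(t,v)\geqslant a!\,\textsf{CAN}_X(t-a,v-a,a+1)$.

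So the argument is essentially: (i) apply \conref{construction} for a suitably chosen $u$; (ii) invoke \lref{t:icax} to get that the resulting array is a \textsf{CA}$_X$ with alphabet size $t-1 = (a+1)$-adjusted parameters and therefore has at least $\textsf{CAN}_X(t-a,v-a,a+1)$ rows; (iii) bound the number of rows of that array from above by $N/a!$ using that the $a!$ orderings of $A$ partition the permutations of $X$ and choosing the sparsest ordering; (iv) rearrange. I expect step (iii) — the pigeonhole choice of $u$ and the verification that the $a!$ orderings really do partition $X$ (each $\pi\in X$ induces a unique linear order on $A$) — to be the only place requiring care, and it is genuinely routine; the substantive content is entirely packaged in \lref{t:icax}. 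One should also double-check the parameter bookkeeping in the substitution $t\mapsto t-a$, $v\mapsto v-a$, alphabet $t-1\mapsto a+1$, and confirm the constraints $v\geqslant t\geqslant 3$, $0<a<t$ guarantee all derived parameters are admissible (in particular $t-a\geqslant 1$ and $v-a\geqslant t-a$).
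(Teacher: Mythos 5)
Your argument is essentially the paper's: Theorem~$\ref{cax}$ is stated there as a direct consequence of \lref{t:icax} together with the fact that the $a!$ orderings of a fixed $a$-set $A$ partition the permutations of $X$ (each $\pi\in X$ induces exactly one linear order on $A$), and your pigeonhole choice of the sparsest ordering $u$ is precisely the step needed to go from that partition to $|\textsf{C}|\leqslant N/a!$ for some $\textsf{C}\in\mathcal{C}_a(X)$ when $N$ is not assumed to be $t!$ (the paper only writes out the exact count $t!/a!$ for the case $N=t!$). The one thing to repair is the parameter bookkeeping in your step (ii). The array $\textsf{C}$ produced by deleting $a$ symbols has $v-a$ columns and alphabet $[a+1]$, and the proof of \lref{t:icax} shows that every $(t-a)$-way interaction $T$ is covered by at least $\mu(T)$ rows; in other words, for general $a$ the correct conclusion is that $\textsf{C}$ is a \textsf{CA}$_{X}(|\textsf{C}|;\,t-a,\,v-a,\,a+1)$, and the parameters $(2,\,v-t+2,\,t-1)$ displayed in the lemma's statement are just the instance $a=t-2$. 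Your intermediate chain, which substitutes $t\mapsto t-a$ to get alphabet size $(t-a)-1$ and then equates this with $t-1$, is not a valid manipulation (for $a>0$ these differ, and neither equals the alphabet size $a+1$ that actually occurs); fortunately the inequality $|\textsf{C}|\geqslant\textsf{CAN}_X(t-a,v-a,a+1)$ that you extract from it is the correct one, and with that line replaced by the observation above the proof goes through.
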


For a row $\rho$ of an array $\textsf{C} \in \mathcal{C}_{a}(X)$, let $m_i$ be the number of times the symbol $i$ appears in $\rho$ for $0 \leq i \leq a$. We call $(m_{0}, \dots, m_{a})$ the \textit{multiplicity vector} of $\rho$.

\begin{lemma}\label{t:mult}
Let $X$ be an \textup{\textsf{SCA}}$(t!;t,v)$, let $0 < a < t$, and let $m_{0}, \dots, m_{a}$ be non-negative integers that sum to $v - a$. Then, across all the arrays in $\mathcal{C}_{a}(X)$, there must be exactly $t!$ rows with multiplicity vector $(m_{0}, \dots, m_{a})$.
\end{lemma}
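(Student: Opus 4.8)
The plan is to set up a direct double-counting argument that does not really need the excess-coverage machinery — only the fact that $|X| = t!$. First I would record the bookkeeping. A row of an array in $\mathcal{C}_{a}(X)$ is determined by a triple $(A,u,\rho)$, where $A$ is the $a$-subset of $[v]$ removed, $u$ is the permutation of $A$ labelling the array, and $\rho \in X$ is a permutation covering $u$ (so $\rho$ indexes that row). Any $\rho \in X$ covers exactly one ordering of a given set $A$, namely the ordering of $A$ induced by $\rho$; consequently each pair $(A,\rho)$ with $A$ an $a$-subset of $[v]$ and $\rho \in X$ arises from a unique such triple. Hence the rows occurring across all arrays of $\mathcal{C}_{a}(X)$ are in bijection with the $\binom{v}{a}\,t!$ pairs $(A,\rho)$, and it suffices to show that for each composition $(m_{0},\dots,m_{a})$ of $v-a$ into $a+1$ nonnegative parts, exactly $t!$ of these pairs yield a row with that multiplicity vector.

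Next I would express the multiplicity vector of the row indexed by $(A,\rho)$ purely in terms of where the symbols of $A$ sit inside $\rho$. If the elements of $A$ occupy positions $q_{0} < q_{1} < \dots < q_{a-1}$ in $\rho$, then a symbol $\nu \in [v]\setminus A$ lying before position $q_{0}$ is preceded by no element of $A$, one lying strictly between $q_{i-1}$ and $q_{i}$ is preceded by exactly $i$ of them, and one lying after $q_{a-1}$ is preceded by all $a$. Counting the symbols in each block gives $m_{0} = q_{0}$, $m_{i} = q_{i} - q_{i-1} - 1$ for $1 \le i \le a-1$, and $m_{a} = (v-1) - q_{a-1}$; in particular the multiplicity vector depends only on the \emph{set of positions} $\{q_{0},\dots,q_{a-1}\}$, and this is precisely the classical stars-and-bars encoding. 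So the map from $a$-subsets of $\{0,\dots,v-1\}$ to compositions of $v-a$ into $a+1$ parts given by $\{q_{0},\dots,q_{a-1}\}\mapsto(m_{0},\dots,m_{a})$ is a bijection, the positions being recovered by $q_{0} = m_{0}$ and $q_{i} = q_{i-1}+m_{i}+1$.

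Finally I would fix a target composition $(m_{0},\dots,m_{a})$ and a permutation $\rho \in X$. For fixed $\rho$ the assignment $A \mapsto \{q_{0},\dots,q_{a-1}\}$ of symbol-positions is itself a bijection from $a$-subsets of $[v]$ to $a$-subsets of $\{0,\dots,v-1\}$, so composing with the bijection above shows there is exactly one $a$-subset $A$ for which $(A,\rho)$ produces a row with multiplicity vector $(m_{0},\dots,m_{a})$. Summing over the $t!$ permutations of $X$ gives exactly $t!$ such rows, as claimed. As a consistency check, $\mathcal{C}_{a}(X)$ has $\binom{v}{a}a!$ arrays each with $t!/a!$ rows, for a total of $\binom{v}{a}t!$ rows, and there are $\binom{v}{a}$ compositions of $v-a$ into $a+1$ parts, so each composition accounting for $t!$ rows is exactly right.

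I do not anticipate a genuine obstacle. The only points needing care are in the first paragraph — counting "rows across all arrays of $\mathcal{C}_{a}(X)$" with the correct multiplicity, i.e.\ each pair $(A,\rho)$ once rather than once per ordering $u$ of $A$ — and keeping the position/composition indexing straight. Note that the argument uses only $|X| = t!$; the hypothesis that $X$ is a sequence covering array is not needed for this lemma itself, but is what makes the conclusion useful, since (via \lref{t:icax}) it guarantees the arrays in $\mathcal{C}_{a}(X)$ are excess coverage arrays.
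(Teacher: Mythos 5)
Your proof is correct and follows essentially the same route as the paper's: fix the multiplicity vector, observe that it forces the deleted symbols to occupy the positions $q_j = j + \sum_{i=0}^{j} m_i$ (your recursion $q_0=m_0$, $q_i=q_{i-1}+m_i+1$ gives exactly these), and note that each of the $t!$ permutations of $X$ contributes exactly one such row. Your version just makes explicit the bijection between rows of $\mathcal{C}_a(X)$ and pairs $(A,\rho)$, and the observation that only $|X|=t!$ is used, both of which the paper leaves implicit.
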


\begin{proof}
Let $\textsf{C} \in \mathcal{C}_{a}(X)$ and let $A$ be the set of symbols that were deleted from $X$ to generate $\textsf{C}$. In order to generate a row in $\textsf{C}$ with multiplicity vector $(m_{0}, \dots, m_{a})$, the symbols in $A$ must appear in positions $j + \sum_{i = 0}^{j} m_i$ for $0 \leq j \leq a - 1$ in the corresponding permutation in $X$ where positions are indexed from 0. Now, there are $t!$ permutations in $X$ and for each permutation, there is one set of symbols in these positions. Therefore, across all the arrays in $\mathcal{C}_{a}(X)$, there must be exactly $t!$ rows with multiplicity vector $(m_{0}, \dots, m_{a})$.
\end{proof}

Recall that a row of a $\textsf{CA}_{X}(N; t,k,v)$ is \textit{constant} if it contains only one symbol (i.e. $m_{i} = k$ for some $i$). Since the alphabet for each array in $\mathcal{C}_{a}(X)$ has $a+1$ symbols, then by \lref{t:mult}, there are a total of $(a+1)t!$ constant rows across all the arrays in $\mathcal{C}_{a}(X)$. Recall that there are $v!/(v-a)!$ arrays in $\mathcal{C}_{a}(X)$. Therefore, the average number of constant rows per array in $\mathcal{C}_{a}(X)$ is $(a+1)t!(v - a)!/v!$.

\section{Excess coverage arrays with strength 2}\label{ca2}

In this section, we describe computations relating to the existence of excess coverage arrays with strength 2. The results of these computations allow us to prove \tref{unique}. Consider a strength 2 covering array with excess coverage. Let $T = \{ (c_1, \nu_1),$ $(c_2, \nu_2) \}$ be a 2-way interaction. If $\nu_1 = \nu_2$, then $\mu(T) = 2$ but if $\nu_1 \neq \nu_2$, then $\mu(T) = 1$. Following the convention of Chee et al.~\cite{Chee13}, we refer to the former kind of pair as \textit{constant pairs} and the latter as \textit{non-constant pairs}. Let \textsf{C} be a \textsf{CA}$_{X}(N;2,k,v)$ where every 2-way interaction $T$ is covered exactly $\mu(T)$ times. For distinct columns $c_{1}$ and $c_{2}$, there are $v^{2}$ interactions of the form $\{ (c_{1}, \nu_{1}), (c_{2}, \nu_{2})\}$. Of these, $v$ are constant pairs which are covered exactly twice by \textsf{C} while the remaining $v^{2} - v$ interactions are non-constant pairs and are covered exactly once by \textsf{C}. Therefore, \textsf{C} has $N = v^{2} - v + 2v = v(v+1)$ rows. Therefore, \textsf{CAN}$_{X}(2,k,v) \geqslant v(v+1)$. By \lref{t:icax}, a \textsf{CA}$_{X}(v(v+1);2,k,v)$ is necessary for the existence of an \textsf{SCA}$((v+1)!;v+1,v+k-1)$. Chee et al. proved the following upper bound on the number of columns of a \textsf{CA}$_{X}(v(v+1);2,k,v)$.

\begin{theorem}[\cite{Chee13}] \label{caxbound}
For $v \geqslant 4$, if a \textsf{CA}$_{X}(v(v + 1);2,k,v)$ exists, then $k \leq v + 2$. 
\end{theorem}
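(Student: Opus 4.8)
The plan is to run a rank argument on a suitable incidence matrix; elementary counting (the obvious second-moment/variance estimate on the agreement function between rows) only yields $k\le v+3$, so a little linear algebra is needed to shave off the last column. Suppose $\textsf{C}$ is a $\textsf{CA}_X(v(v+1);2,k,v)$ with rows $\rho_1,\dots,\rho_N$, where $N=v(v+1)$; we may assume $k\ge 2$, since otherwise the claim is trivial. Because $N$ equals the minimum possible size, each $2$-way interaction $T$ in $\textsf{C}$ is covered exactly $\mu(T)$ times (the coverage counts in any fixed pair of columns are at least $\mu(T)$ and sum to $N=\sum_T\mu(T)$, so equality holds throughout). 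In particular every symbol occurs exactly $v+1$ times in each column, every constant pair occurs exactly twice in each pair of columns, and every non-constant pair exactly once. Let $M$ be the $N\times vk$ real matrix with rows indexed by the $\rho_i$ and columns indexed by pairs $(c,\sigma)$ ($c$ a column of $\textsf{C}$, $\sigma\in[v]$), where the entry of $M$ in row $\rho$ and column $(c,\sigma)$ is $1$ if $\textsf{C}$ has symbol $\sigma$ in row $\rho$ and column $c$, and $0$ otherwise. Since $M$ has only $N$ rows, $\operatorname{rank}(M)\le v(v+1)$, and the whole theorem reduces to showing $\operatorname{rank}(M)=k(v-1)+1$: this gives $k(v-1)+1\le v(v+1)$, that is $k\le v+2+\tfrac1{v-1}$, which forces $k\le v+2$ for $v\ge 4$ (indeed for $v\ge 3$).

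To compute $\operatorname{rank}(M)$ I would pass to $M^{\mathsf T}M$, whose $(c,\sigma),(c',\tau)$ entry is the number of rows of $\textsf{C}$ with symbol $\sigma$ in column $c$ and symbol $\tau$ in column $c'$. By the coverage conditions this entry is $v+1$ if $(c,\sigma)=(c',\tau)$, is $0$ if $c=c'$ and $\sigma\ne\tau$, is $2$ if $c\ne c'$ and $\sigma=\tau$, and is $1$ if $c\ne c'$ and $\sigma\ne\tau$. Grouping coordinates by column exhibits $M^{\mathsf T}M$ as a $k\times k$ array of $v\times v$ blocks, with diagonal blocks $(v+1)I_v$ and off-diagonal blocks $I_v+J_v$; equivalently
\[
M^{\mathsf T}M \;=\; v\,I_{vk} \;+\; J_k\otimes I_v \;+\; (J_k-I_k)\otimes J_v .
\]
Diagonalising with the tensor eigenvectors $u\otimes w$ (taking $u$ equal to $\mathbf 1_k$ or lying in $\mathbf 1_k^{\perp}$, and $w$ equal to $\mathbf 1_v$ or lying in $\mathbf 1_v^{\perp}$) gives eigenvalue $k(v+1)$ with multiplicity $1$, eigenvalue $v+k$ with multiplicity $v-1$, eigenvalue $v$ with multiplicity $(k-1)(v-1)$, and eigenvalue $0$ with multiplicity $k-1$ (the last coming from $u\otimes\mathbf 1_v$ with $u\perp\mathbf 1_k$). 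Hence $\operatorname{rank}(M)=\operatorname{rank}(M^{\mathsf T}M)=vk-(k-1)=k(v-1)+1$, and the proof finishes as in the previous paragraph.

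I expect the only genuine obstacle to be getting the rank exactly rather than just a bound in the easy direction. One half is free: for each column $c$ of $\textsf{C}$ the $v$ columns $(c,0),\dots,(c,v-1)$ of $M$ sum to the all-ones vector, so the $vk$ columns of $M$ obey $k-1$ obvious independent linear relations and $\operatorname{rank}(M)\le k(v-1)+1$ — but this inequality points the wrong way for us. The substantive step is the matching lower bound $\operatorname{rank}(M)\ge k(v-1)+1$, i.e. verifying that $0$ has multiplicity exactly $k-1$ in $M^{\mathsf T}M$; this is precisely what the spectral computation secures, and it boils down to observing that the three remaining eigenvalues $k(v+1)$, $v+k$, $v$ are strictly positive. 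Everything else — the reduction to exact coverage, the block form of $M^{\mathsf T}M$, the tensor diagonalisation, and the final arithmetic $k\le v+2+\tfrac1{v-1}$ — is routine, so I would expect the argument to go through smoothly once the matrix $M$ is written down.
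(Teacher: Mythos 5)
Your argument is correct. Note first that the paper does not actually prove this statement --- it is quoted from Chee et al.~\cite{Chee13} --- so there is no in-paper proof to match against; judged on its own terms, your proof is complete and sound. The reduction to exact coverage is valid (the coverage counts over a fixed pair of columns each exceed $\mu(T)$ and sum to $N=v(v+1)=\sum_T\mu(T)$, forcing equality, and hence each symbol occurs exactly $v+1$ times per column once $k\ge 2$); the Gram matrix $M^{\mathsf T}M=vI_{vk}+J_k\otimes I_v+(J_k-I_k)\otimes J_v$ is exactly right; the tensor eigenbasis gives eigenvalues $k(v+1)$, $v+k$, $v$, $0$ with multiplicities $1$, $v-1$, $(k-1)(v-1)$, $k-1$, so $\operatorname{rank}(M)=\operatorname{rank}(M^{\mathsf T}M)=k(v-1)+1\le N$, and the arithmetic $k\le v+2+\tfrac{1}{v-1}$ yields $k\le v+2$ for all $v\ge 3$ (so your proof in fact also covers the case $v=3$, which the paper only verifies computationally; for $v=2$ it gives only $k\le 5$, consistent with the existence of the $\textsf{CA}_X(6;2,4,2)$ with $k=v+2=4$). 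This is a clean Fisher/Plotkin-type spectral bound, and as you observe it is genuinely sharper than the naive second-moment estimate on row agreements, which does not reach $v+2$. The one presentational caveat is that the final two paragraphs are written in the conditional (``I would pass to\dots'', ``I expect\dots''); all the claimed computations check out, so this is only a matter of phrasing, not of substance.
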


This theorem, with \tref{cax}, implies that for $t \geq 5$, if an \textsf{SCA}$(t!;t,v)$ exists, then $v \leq 2t - 1$.

We performed a series of computations to find the maximum number of columns $k$ for which a \textsf{CA}$_{X}(v(v+1);2,k,v)$ exists for $v \in \{2,3,4,5,6\}$. The computations for $v=6$ are of particular interest since they relate to the $t=7$ case of Levenshtein's conjecture. This is the smallest value of $t$ for which the existence of \textsf{SCA}s with $t!$ permutations has not been fully resolved. In particular, the results for $v=6$ allow us to prove \tref{t:sca79}. Another consequence of these computations is that the upper bound of $k \leq v + 2$ on the number of columns in a \textsf{CA}$_{X}(v(v+1);2,k,v)$ proved for $v \geq 4$ in \tref{caxbound} also holds for $v \in \{2,3\}$.

For fixed $v \in \{2,3,4,5,6\}$ we recursively found all ways of extending a $\textsf{CA}_{X}(v(v+1); 2,k,v)$ to a $\textsf{CA}_{X}(v(v+1); 2,k+1,v)$. The base of this recursion was the unique $\textsf{CA}_{X}(v(v+1); 2,2,v)$ (this array has two columns and simply contains every non-constant pair once and every constant pair twice). At each step of the recursion, we took a $\textsf{CA}_{X}(v(v+1); 2,k,v)$ and for each symbol found all possible \textit{placements}. A placement for symbol $\nu$ is a set of $v+1$ rows in the $\textsf{CA}_{X}(v(v+1); 2,k,v)$ such that in each column, the symbol $\nu$ appears twice among these rows and all other symbols appear once each. So, if we were to add $\nu$ to a $(k+1)$th column $c$ of these rows, then all 2-way interactions containing the pair $(c, \nu)$ would be covered the suitable number of times. We note that because constant pairs must be covered twice, a placement for symbol $i$ will not be a placement for symbol $j$ for distinct $i$ and $j$. Once all placements were computed, we then found all ways of choosing a placement for each symbol such that these placements partition the rows of the $\textsf{CA}_{X}(v(v+1); 2,k,v)$. Then, to build a \textsf{CA}$_{X}(v(v+1);2,k+1,v)$, we take our original array and add a new column $c$ where the symbol in row $r$ is the symbol whose placement $r$ belongs to.

As part of these computations, it was important to screen for isomorphisms. Two arrays are isomorphic if one can be obtained from the other by permuting the symbols and permuting the columns. It is important to note that any permutation of the symbols must be applied throughout the whole array. Say we were to swap the symbols 0 and 1 in only column 0 of a \textsf{CA}$_{X}(v(v+1);2,k,v)$. Then, the constant pair $\{ (0,0), (1,0) \}$ would now only be covered once while the non-constant pair $\{(0,0), (1,1)\}$ would be covered twice and so the resulting array would not be an excess coverage array. However, if we were to apply the same symbol permutation to every column, we would obtain a \textsf{CA}$_{X}(v(v+1);2,k,v)$. This behaviour contrasts with that of traditional covering arrays in which one can apply different symbol permutations to different columns and still obtain a covering array.

\begin{table}
\begin{center}
\begin{tabular}{c|  c}
$v$ & Maximum number of columns\\
\hline
2 &  4\\
3 &  4\\
4 &  5\\
5 &  6\\
6 &  5\\
\end{tabular}
\caption{Maximum $k$ for which a \textsf{CA}$_{X}(v(v+1);2,k,v)$ exists for $2 \leq v \leq 6$.}
\label{T:maxk}
\end{center}
\end{table}

The maximum number of columns $k$ for which a \textsf{CA}$_{X}(v(v+1);2,k,v)$ exists is recorded in \Tref{T:maxk}. We see that the bound of $k \leqslant v + 2$ on the number of columns of a \textsf{CA}$_{X}(v(v+1);2,k,v)$ also holds for $v \in \{2,3\}$. Indeed, when $v=2$, this bound is met. An example of a \textsf{CA}$_{X}(6;2,4,2)$ is given in Figure~\ref{cax242}. This is the only value of $v$ for which we found a \textsf{CA}$_{X}(v(v+1);2,v+2,v)$. We also see that a \textsf{CA}$_{X}(42;2,6,6)$ does not exist. By \lref{t:icax}, this already shows that an \textsf{SCA}$(7!;7,11)$ does not exist. As stated in \tref{unique}, there is a unique \textsf{CA}$_{X}(42;2,5,6)$. We can use this array and \lref{t:mult} to do better and prove \tref{t:sca79}.

\begin{figure}
\centering
\begin{tabular}{c c c c}
0 & 0 & 0 & 0\\
0 & 0 & 0 & 0\\
0 & 1 & 1 & 1\\
1 & 0 & 1 & 1\\
1 & 1 & 0 & 1\\
1 & 1 & 1 & 0
\end{tabular}
\caption{\label{cax242}A \textsf{CA}$_{X}(6;2,4,2)$.}
\end{figure}

\begin{proof}[Proof of \tref{t:sca79}]
Our computations found exactly one \textsf{CA}$_{X}(42;2,5,6)$ up to isomorphism. This is given in Figure~\ref{fig:design}. We can see that this array contains no constant rows. Our definition of isomorphism preserves the number of constant rows so there are no \textsf{CA}$_{X}(42;2,5,6)$ with any constant rows. However, by \lref{t:mult}, if an \textsf{SCA}$(7!;7,10)$ exists, there must be some \textsf{CA}$_{X}(42;2,5,6)$ with a constant row. Therefore, an \textsf{SCA}$(7!;7,10)$ does not exist. \qedhere

\begin{figure}
\centering
\begin{tabular}{c c c c c | c c c c c | c c c c c}
0 & 0 & 0 & 0 & 1 & 1 & 0 & 2 & 0 & 5 & 2 & 0 & 1 & 1 & 4\\
0 & 0 & 0 & 2 & 0 & 1 & 1 & 1 & 5 & 1 & 2 & 1 & 0 & 3 & 0\\
0 & 1 & 2 & 1 & 2 & 1 & 1 & 3 & 1 & 1 & 2 & 2 & 2 & 2 & 3\\
0 & 2 & 3 & 3 & 4 & 1 & 2 & 0 & 4 & 2 & 2 & 2 & 4 & 0 & 2\\
0 & 3 & 5 & 0 & 0 & 1 & 3 & 4 & 2 & 4 & 2 & 3 & 3 & 5 & 5\\
0 & 4 & 4 & 5 & 3 & 1 & 4 & 1 & 1 & 0 & 2 & 4 & 5 & 4 & 1\\
0 & 5 & 1 & 4 & 5 & 1 & 5 & 5 & 3 & 3 & 2 & 5 & 2 & 2 & 2\\
3 & 0 & 3 & 3 & 3 & 4 & 0 & 5 & 5 & 2 & 5 & 0 & 4 & 4 & 0\\
3 & 1 & 5 & 2 & 5 & 4 & 1 & 4 & 4 & 4 & 5 & 1 & 1 & 0 & 3\\
3 & 2 & 2 & 5 & 0 & 4 & 2 & 1 & 2 & 1 & 5 & 2 & 5 & 1 & 5\\
3 & 3 & 1 & 3 & 2 & 4 & 3 & 0 & 1 & 3 & 5 & 3 & 2 & 3 & 1\\
3 & 3 & 3 & 4 & 3 & 4 & 4 & 2 & 4 & 4 & 5 & 4 & 3 & 2 & 2\\
3 & 4 & 0 & 0 & 4 & 4 & 4 & 4 & 3 & 5 & 5 & 5 & 0 & 5 & 5\\
3 & 5 & 4 & 1 & 1 & 4 & 5 & 3 & 0 & 0 & 5 & 5 & 5 & 5 & 4
\end{tabular}
\caption{\label{fig:design}The unique $\textsf{CA}_{X}(42; 2, 5, 6)$.}
\end{figure}
\end{proof}

Each row of the array in \fref{fig:design} contains a repeated symbol. One can also prove \tref{t:sca79} by arguing that by \lref{t:mult}, if an \textsf{SCA}$(7!;7,10)$ exists, then there must be a \textsf{CA}$_{X}(42;2,5,6)$ with a row with multiplicity vector $(m_{0}, \dots, m_{5})$ such that $m_{i} \leqslant 1$ for $0 \leqslant i \leqslant 5$. However, our computations show that no such array exists.

We remark that since a \textsf{CA}$_{X}(42;2,6,6)$ does not exist, \textsf{CAN}$_{X}(2,6,6) \geqslant 43$. Then, by \tref{cax}, \textsf{SCAN}$(7,11) \geqslant 5160$. Similarly, using \Tref{T:maxk} and \tref{cax}, we find that \textsf{SCAN}$(4,7) \geqslant 26$, \textsf{SCAN}$(5,9) \geqslant 126$ and \textsf{SCAN}$(6,11) \geqslant 744$. Although we have proved that an \textsf{SCA}$(7!;7,10)$ does not exist, the existence of a \textsf{CA}$_{X}(42;2,5,6)$ means we are unable to use \tref{cax} to derive an improved lower bound for \textsf{SCAN}$(7,10)$ as in the other cases above. 

\section{Strength 2 orthogonal arrays}\label{oa2}

In this section, we discuss connections between strength 2 excess coverage arrays and strength 2 orthogonal arrays and analyse the computational results of \sref{ca2} in this context. An \textit{orthogonal array}, denoted \textsf{OA}$(t,k,v)$ is a \textsf{CA}$(N;t,k,v)$ with the property that every $t$-way interaction is covered by exactly 1 row. Unless otherwise specified, when we refer to orthogonal arrays, we are referring to strength 2 orthogonal arrays. We begin by observing upper and lower bounds for \textsf{CAN}$_{X}(2,k,v)$.

\begin{lemma}
For integers $v,k \geqslant 2$,
\begin{equation*}
\textup{\textsf{CAN}}(2,k,v) \leqslant \textup{\textsf{CAN}}_{X}(2,k,v) \leqslant \textup{\textsf{CAN}}(2,k,v) + v.
\end{equation*}
\end{lemma}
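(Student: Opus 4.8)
The plan is to establish the two inequalities separately; both should be short.

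For the lower bound $\textsf{CAN}(2,k,v)\le\textsf{CAN}_X(2,k,v)$: any $\textsf{CA}_X(N;2,k,v)$ is by definition a $\textsf{CA}(N;2,k,v)$, since the excess coverage condition requires every $2$-way interaction $T$ to be covered at least $\mu(T)\ge 1$ times, which in particular means it is covered at least once. Hence every excess coverage array of a given size is a covering array of the same size, so the minimum size of the former is at least the minimum size of the latter. This direction is essentially immediate from the definitions.

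For the upper bound $\textsf{CAN}_X(2,k,v)\le\textsf{CAN}(2,k,v)+v$: first I would take an optimal $\textsf{CA}(N;2,k,v)$ with $N=\textsf{CAN}(2,k,v)$. The only way this fails to be an excess coverage array is that a constant pair $\{(c_1,\sigma),(c_2,\sigma)\}$ (the only interactions with $\mu(T)=2$) might be covered only once. The fix is to append the $v$ constant rows, one for each symbol $\sigma\in[v]$ — that is, the row that is equal to $\sigma$ in every one of the $k$ columns. After adding these rows, every constant pair $\{(c_1,\sigma),(c_2,\sigma)\}$ is covered at least once by the original array and once more by the constant-$\sigma$ row, so it is covered at least twice; every non-constant pair was already covered at least once and the new rows (being constant) do not cover any non-constant pair, so it is still covered at least once. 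The resulting array has $N+v$ rows and is a $\textsf{CA}_X(N+v;2,k,v)$, giving the bound.

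I do not anticipate a genuine obstacle here: the argument is a direct unpacking of the definition of excess coverage for strength $2$ (noting that the multiplicity $\mu(T)$ equals $2$ exactly for constant pairs and $1$ otherwise, as already observed in the text), together with the observation that constant rows cover precisely the constant pairs. The only point requiring a moment's care is checking that appending constant rows cannot destroy the coverage of non-constant pairs, which is clear since a constant row has a single symbol in all columns and so never realises a pair of distinct symbols.
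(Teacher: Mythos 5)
Your proposal is correct and follows essentially the same route as the paper: the first inequality is immediate from the definitions, and the second is obtained by appending the $v$ constant rows (the paper's array $\textsf{D}$ with $\textsf{D}[i,j]=i$) to an optimal covering array. No issues.
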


\begin{proof}
A \textsf{CA}$_{X}(N;2,k,v)$ is also a \textsf{CA}$(N;2,k,v)$ so the first inequality holds. Let $N = \textsf{CAN}(2,k,v)$ and let $N' = \textsf{CAN}(2,k,v) + v$. Let \textsf{C} be a \textsf{CA}$(N;2,k,v)$ and let \textsf{D} be a $v \times k$ array such that $\textsf{D}[i,j] = i$ for $i \in [v]$ and $j \in [k]$. Then, every 2-way interaction is covered at least once in \textsf{C} and every constant pair is covered exactly once in \textsf{D}. Therefore the rows of \textsf{C} and \textsf{D} form a \textsf{CA}$_{X}(N';2,k,v)$. Thus, the second inequality holds.
\end{proof}

Recall also that \textsf{CAN}$_{X}(2,k,v) \geqslant v(v+1)$. By considering strength 2 orthogonal arrays, we obtain the following corollary.

\begin{cor}
If an \textup{\textsf{OA}}$(2,k,v)$ exists, then \textup{\textsf{CAN}}$_{X}(2,k,v) = v(v+1)$.
\end{cor}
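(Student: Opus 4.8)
The plan is to combine the two displayed bounds on $\textsf{CAN}_X(2,k,v)$ with the structure of an orthogonal array. First I would recall that an $\textsf{OA}(2,k,v)$ has exactly $v^2$ rows, since every $2$-way interaction between a fixed pair of columns is covered exactly once and there are $v^2$ such interactions. This array is in particular a $\textsf{CA}(v^2;2,k,v)$, so $\textsf{CAN}(2,k,v) \le v^2$.

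Next I would invoke the preceding lemma, which gives $\textsf{CAN}_X(2,k,v) \le \textsf{CAN}(2,k,v) + v \le v^2 + v = v(v+1)$. Combining this with the already-established lower bound $\textsf{CAN}_X(2,k,v) \ge v(v+1)$ (noted just before the corollary) forces equality $\textsf{CAN}_X(2,k,v) = v(v+1)$.

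There is really no obstacle here: the corollary is a direct consequence of the lemma together with the two facts that an orthogonal array has exactly $v^2$ rows and that $\textsf{CAN}_X(2,k,v) \ge v(v+1)$. The only point that needs a sentence of justification is why an $\textsf{OA}(2,k,v)$ has exactly $v^2$ rows, but this follows immediately from the definition of an orthogonal array: fixing any two columns, the $v^2$ ordered pairs of symbols each appear in exactly one row, so the row count is $v^2$. I would phrase the proof in three lines: orthogonal array gives $\textsf{CAN}(2,k,v)\le v^2$; the lemma then gives $\textsf{CAN}_X(2,k,v)\le v^2+v=v(v+1)$; and the reverse inequality $\textsf{CAN}_X(2,k,v)\ge v(v+1)$ was established earlier, so equality holds.
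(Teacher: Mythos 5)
Your argument is correct and is exactly the one the paper intends: the corollary is stated without proof as an immediate consequence of the preceding lemma, using that an \textsf{OA}$(2,k,v)$ is a \textsf{CA}$(v^{2};2,k,v)$ so that $\textup{\textsf{CAN}}_{X}(2,k,v)\le v^{2}+v$, matched against the lower bound $\textup{\textsf{CAN}}_{X}(2,k,v)\ge v(v+1)$ noted just before. No further comment is needed.
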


If $v$ is a prime power, then an \textsf{OA}$(2,v+1,v)$ exists~\cite{HSS} and so a \textsf{CA}$_{X}(v(v+1);2,v+1,v)$ also exists. Hence, for infinitely many values of $v$, we can construct a \textsf{CA}$_{X}(v(v+1);2,k,v)$ whose number of columns is one less than the upper bound given by \tref{caxbound}. We continue our discussion of orthogonal arrays by characterising exactly when a \textsf{CA}$_{X}(v(v+1);2,k,v)$ contains an \textsf{OA}$(2,k,v)$.

\begin{lemma}
A \textup{\textsf{CA}}$_{X}(v(v+1); 2,k,v)$ contains a subarray that forms an \textup{\textsf{OA}}$(2,k,v)$ if and only if it contains a constant row for each symbol in $[v]$. \label{const}
\end{lemma}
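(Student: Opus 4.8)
The plan is to prove both directions of the biconditional, starting from the arithmetic that governs how symbols distribute in a $\textsf{CA}_X(v(v+1);2,k,v)$. First I would recall the row-counting facts established earlier: such an array has exactly $v(v+1)$ rows, every non-constant pair is covered exactly once, and every constant pair exactly twice. From this I want to pin down, for a single column $c$, how many times each symbol appears. Fix a symbol $\nu$ and a column $c$, and count the pairs $\{(c,\nu),(c',\sigma)\}$ over a second column $c'$: summing the coverage multiplicities gives that the number of rows with $\nu$ in column $c$ equals $(v-1)+2 = v+1$ when paired against the constant interaction, yielding that \emph{every symbol appears exactly $v+1$ times in every column}. (This is the strength-2 analogue of the replication-number argument for orthogonal arrays, with the ``$+1$'' coming from the doubled constant pairs.)

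Next I would handle the backward direction, which I expect to be the easier one. Suppose the array contains, for each symbol $i\in[v]$, a constant row $\mathbf{i}\cdots\mathbf{i}$. Delete one such constant row for each symbol, removing $v$ rows in total and leaving a $v^2\times k$ subarray $\textsf{C}'$. I claim $\textsf{C}'$ is an $\textsf{OA}(2,k,v)$. For a non-constant pair $\{(c_1,\nu_1),(c_2,\nu_2)\}$ with $\nu_1\ne\nu_2$: it was covered exactly once in the full array, and no constant row covers a non-constant pair, so it is still covered exactly once in $\textsf{C}'$. For a constant pair $\{(c_1,\nu),(c_2,\nu)\}$: it was covered exactly twice in the full array; exactly one of the two covering rows is the deleted constant row $\mathbf{\nu}\cdots\mathbf{\nu}$ (since the deleted constant row for symbol $\nu$ certainly covers this pair, and it is one of only two covers), so $\textsf{C}'$ covers it exactly once. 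Hence every $2$-way interaction is covered exactly once in $\textsf{C}'$, and since $\textsf{C}'$ has $v^2$ rows it is an $\textsf{OA}(2,k,v)$.

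For the forward direction, suppose $\textsf{D}$ is a subarray of the $\textsf{CA}_X$ forming an $\textsf{OA}(2,k,v)$; then $\textsf{D}$ has $v^2$ rows, and the complementary multiset $\textsf{E}$ of deleted rows has $v(v+1)-v^2 = v$ rows. I want to show $\textsf{E}$ consists of exactly one constant row for each symbol. Consider a fixed column $c$ and a fixed symbol $\nu$: symbol $\nu$ appears $v+1$ times in column $c$ of the full array (by the replication count above) and exactly $v$ times in column $c$ of $\textsf{D}$ (an orthogonal array of strength $2$ has each symbol appearing $v$ times per column), so $\textsf{E}$ contains exactly one row with $\nu$ in column $c$. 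Since this holds for every symbol and $\textsf{E}$ has only $v$ rows, the rows of $\textsf{E}$, restricted to column $c$, are a permutation of $[v]$; applying the same reasoning to every column, $\textsf{E}$ has $v$ rows and in each column each symbol occurs once. The remaining step — the part I expect to be the main obstacle — is to upgrade ``each column of $\textsf{E}$ is a permutation of $[v]$'' to ``$\textsf{E}$ is exactly the $v$ constant rows.'' For this I would look at constant pairs: a constant pair $\{(c_1,\nu),(c_2,\nu)\}$ is covered twice overall but exactly once in $\textsf{D}$, so $\textsf{E}$ covers it exactly once; that is, the unique row of $\textsf{E}$ with $\nu$ in column $c_1$ and the unique row of $\textsf{E}$ with $\nu$ in column $c_2$ must coincide. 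Letting $c_1$ be fixed and $c_2$ range over all other columns forces the single row of $\textsf{E}$ holding $\nu$ in column $c_1$ to hold $\nu$ in every column, i.e.\ it is the constant row $\mathbf{\nu}\cdots\mathbf{\nu}$. Running $\nu$ over $[v]$ gives all $v$ constant rows, completing the proof.
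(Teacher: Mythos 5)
Your proof is correct, and its overall shape matches the paper's: both directions hinge on the fact that a $\textsf{CA}_{X}(v(v+1);2,k,v)$ covers every $2$-way interaction \emph{exactly} $\mu(T)$ times, so the complement of a putative $\textsf{OA}(2,k,v)$ is a multiset of exactly $v$ rows whose coverage is completely determined. Your backward direction is the paper's argument almost verbatim. In the forward direction, however, you take a longer route than necessary: you first compute the replication number $v+1$ per symbol per column, deduce that each column of the complement $\textsf{E}$ is a permutation of $[v]$, and then use the constant-pair counts to glue the entries of each row of $\textsf{E}$ together. The paper instead observes in one line that the $v$ leftover rows must avoid covering \emph{every} non-constant pair (since those are already covered exactly once inside the orthogonal array), and any row containing two distinct symbols covers a non-constant pair; hence each leftover row is constant, and they must carry distinct symbols to hit each constant pair exactly once more. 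The step you flagged as the ``main obstacle'' therefore dissolves entirely under the paper's choice of which pairs to count: non-constant pairs force constancy directly, whereas your constant-pair argument has to first know where each symbol sits in each column of $\textsf{E}$. Your replication-number computation is not wasted effort in general --- it is the standard strength-$2$ counting fact --- but here it is machinery the problem does not require.
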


\begin{proof}
Let $\textsf{C}$ be a \textsf{CA}$_{X}(v(v+1); 2,k,v)$ that contains an \textsf{OA}$(2,k,v)$. In the rows of the \textsf{OA}, every 2-way interaction is covered exactly once so the remaining $v$ rows of $\textsf{C}$ must cover each constant pair once while avoiding all non-constant pairs. The only way to achieve this is if these $v$ rows are all constant and contain distinct symbols of $[v]$. Conversely, if $\textsf{C}$ contains constant rows for each symbol, then the remaining rows must cover every 2-way interaction exactly once and so must form an \textsf{OA}$(2,k,v)$.
\end{proof}

Let $X$ be an \textsf{SCA}$(t!;t,v)$. By \lref{t:mult}, the average number of constant rows per array in $\mathcal{C}_{t-2}(X)$ is $(t - 1)t!(v - t + 2)!/v!$. When $t \geqslant 4$ and $v \geqslant t+2$, this average is less than $t - 1$. An array in $\mathcal{C}_{t-2}(X)$ that contains an \textsf{OA}$(2,v - t + 2, t-1)$ must have at least $t - 1$ constant rows. Therefore, when $t \geqslant 4$ and $v \geqslant t + 2$, there must be some array in $\mathcal{C}_{t - 2}(X)$ that does not contain an orthogonal array. Hence, it is sensible to consider the existence of covering arrays with excess coverage that do not contain orthogonal arrays. \Tref{caxtable} details the number of \textsf{CA}$_{X}(v(v+1);2,k,v)$ that do and do not contain orthogonal arrays for $2 \leq v \leq 6$ and $3 \leq k \leq 6$. \Tref{caxflatrows} records the number of \textsf{CA}$_{X}(42;2,4,6)$ with given numbers of constant rows. If $X$ is an \textsf{SCA}$(7!;7,9)$, then every element of $\mathcal{C}_{5}(X)$ is a \textsf{CA}$_{X}(42;2,4,6)$. Moreover, the average number of constant rows per array in $\mathcal{C}_{5}(X)$ is 2.

\begin{table}
\begin{center}
\begin{tabular}{c || c c c c}
& \multicolumn{4}{c}{$k$}\\
$v$ & 3 & 4 & 5 & 6\\
\hline
2 & 1 $\vert$ 0 & 0 $\vert$ 1 & 0 $\vert$ 0 & 0 $\vert$ 0 \\
3 & 3 $\vert$ 1 & 2 $\vert$ 1 & 0 $\vert$ 0 & 0 $\vert$ 0 \\
4 & 15 $\vert$ 12 & 32 $\vert$ 6 & 80 $\vert$ 5 & 0 $\vert$ 0 \\
5 & 283 $\vert$ 1\,067 & 2\,234 $\vert$ 3\,805 & 104\,146 $\vert$ 348 & 2\,073\,801 $\vert$ 0\\
6 & 190\,472 $\vert$ 1\,666\,259 & 0 $\vert$ 39\,802\,785 & 0 $\vert$ 1 & 0 $\vert$ 0
\end{tabular}
\caption{\label{caxtable}Number of \textsf{CA}$_{X}(v(v+1);2,k,v)$ for different values of $k$ and $v$. Each entry $r \vert s$ indicates that there are $r$ arrays with those parameters that contain an orthogonal array and $s$ arrays that do not contain an orthogonal array.}
\end{center}
\end{table}

\begin{table}
\begin{center}
\begin{tabular}{c c}
Number of constant rows & Number of \textsf{CA}$_{X}(42;2,4,6)$\\
\hline
0 & 3\,034\,497 \\
1 & 10\,418\,576\\
2 & 12\,826\,409\\
3 & 7\,994\,870\\
4 & 2\,393\,309\\
5 & 2\,729\,090\\
6 & 387\,172\\
7 & 18\,385\\
8 & 468\\
9 & 9\\
\end{tabular}
\caption{\label{caxflatrows}The number of CA$_{X}(42; 2,4,6)$ by the number of constant rows.}
\end{center}
\end{table}

After $v=6$, the next three values of $v$ are all prime powers. Hence, for these values of $v$, we will find \textsf{CA}$_{X}(v(v+1);2,k,v)$ containing orthogonal arrays for all $3 \leqslant k \leqslant v + 1$. Recall our definition of isomorphic excess coverage arrays. If we were to permute the symbols of some column in an excess coverage array, in order to obtain another excess coverage array we need to apply the same symbol permutation to all columns. However, if we apply different symbol permutations to different columns of an orthogonal array, we will still obtain an orthogonal array. Hence, we have fewer symmetries to exploit when it comes to excess coverage arrays. Ideally, given that certain sequence covering arrays require the existence of excess coverage arrays that do not contain an orthogonal array, we would like a computational search method that avoids having to generate excess coverage arrays containing orthogonal arrays. If we were to employ the same search methods used for $v \in \{2,3,4,5\}$ in \sref{ca2} for $v > 6$, our catalogues of excess coverage arrays would necessarily include those that contain orthogonal arrays. The relative lack of symmetries of excess coverage arrays would further inflate our catalogue. The following lemma informs one possible strategy for cataloguing excess coverage arrays that do not contain an orthogonal array.


\begin{lemma}
\label{compoa}Let $k \geqslant 5$ and let \textup{\textsf{C}} be a \textup{\textsf{CA}}$_{X}(v(v+1);2,k,v)$ that does not contain an orthogonal array. Then there is some column of \textup{\textsf{C}} such that if that column is removed from \textup{\textsf{C}}, then the resulting \textup{\textsf{CA}}$_{X}(v(v+1);2,k-1,v)$ does not contain an orthogonal array either. 
\end{lemma}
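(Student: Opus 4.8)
The plan is to argue by contradiction: suppose that $\textsf{C}$ is a $\textsf{CA}_X(v(v+1);2,k,v)$ with $k\ge5$ that does not contain an orthogonal array, but that deleting \emph{any} single column produces a $\textsf{CA}_X(v(v+1);2,k-1,v)$ that \emph{does} contain an $\textsf{OA}(2,k-1,v)$. By \lref{const}, the hypothesis that $\textsf{C}$ itself contains no orthogonal array means, equivalently, that $\textsf{C}$ does not contain a constant row for every symbol in $[v]$; so there is at least one symbol, say $\sigma$, for which $\textsf{C}$ has no constant row. On the other hand, for each column $c$ of $\textsf{C}$, the array $\textsf{C}_{-c}$ obtained by deleting $c$ contains an $\textsf{OA}(2,k-1,v)$, and again by \lref{const} this forces $\textsf{C}_{-c}$ to contain a constant row for each of the $v$ symbols. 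The key point is that a row $\rho$ of $\textsf{C}$ becomes a constant row of $\textsf{C}_{-c}$ exactly when $\rho$ is constant on the remaining $k-1$ columns, i.e.\ when $\rho$ takes a single value on all columns except possibly column $c$. Call such a row \emph{near-constant with exception $c$} (so a genuinely constant row of $\textsf{C}$ is near-constant with exception $c$ for every $c$, but since $\textsf{C}$ has no constant row for symbol $\sigma$, any row of $\textsf{C}$ that becomes a $\sigma$-constant row of some $\textsf{C}_{-c}$ is near-constant with a \emph{unique} exception $c$ and has its one ``off'' entry in column $c$).

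Next I would count these near-constant rows. From the displayed computation in \sref{ca2}, $\textsf{C}$ has exactly $v(v+1)$ rows, with each constant pair covered exactly twice and each non-constant pair covered exactly once. Now fix the symbol $\sigma$ that has no constant row in $\textsf{C}$. For each column $c$, $\textsf{C}_{-c}$ must contain at least one $\sigma$-constant row, hence $\textsf{C}$ contains at least one row that is near-constant in $\sigma$ with exception $c$; and since $\sigma$ has no fully constant row, such a row has a unique exception column, so these rows are distinct for distinct $c$. Thus $\textsf{C}$ contains at least $k$ rows that are near-constant in $\sigma$, one for each exception column. Each such row has $k-1$ of its entries equal to $\sigma$, so it contributes $\binom{k-1}{2}$ covered constant pairs in $\sigma$; these $k$ rows together cover, counted with multiplicity, at least $k\binom{k-1}{2}$ $\sigma$-constant pairs, and more precisely: the pair of columns $\{c_1,c_2\}$ is covered by such a row unless the row's exception column lies in $\{c_1,c_2\}$. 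So among the $k$ rows, the pair $\{c_1,c_2\}$ is covered by at least $k-2$ of them. But every constant pair in $\sigma$ is covered exactly twice in all of $\textsf{C}$. For $k\ge5$ we have $k-2\ge3>2$, a contradiction.

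I expect the main obstacle to be the bookkeeping in the last step: making precise the claim that the $k$ near-constant rows are pairwise distinct and correctly counting, for a fixed pair of columns, how many of them cover the corresponding $\sigma$-constant pair. The subtlety is that a priori $\textsf{C}_{-c}$ could contain several $\sigma$-constant rows, and one must be careful that a single row of $\textsf{C}$ is not being double-counted as witness for two different exception columns — this is exactly where the hypothesis that $\sigma$ has no genuine constant row in $\textsf{C}$ is used, since it pins down the exception column uniquely. Once that is nailed down, the inequality $k-2 > 2$ for $k\ge5$ delivers the contradiction immediately, and hence some column can indeed be removed while keeping the array free of an orthogonal array. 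One should also double-check the boundary behaviour (the statement is only claimed for $k\ge5$, consistent with $k-2>2$ failing at $k=4$), and note that the argument only used \lref{const} and the row-count identity for $\textsf{CA}_X(v(v+1);2,k,v)$, both established earlier in the paper.
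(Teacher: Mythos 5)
Your proposal is correct and follows essentially the same argument as the paper: assume every column deletion creates an orthogonal array, use Lemma~\ref{const} to produce, for each column $c$, a row that is constant in the deficient symbol except in column $c$, and then observe that $k-2\geqslant 3$ of these distinct rows cover a fixed constant pair, contradicting the fact that a \textsf{CA}$_{X}(v(v+1);2,k,v)$ covers each constant pair exactly twice. The bookkeeping you flag as a potential obstacle is already resolved by your observation that the exception column is unique, so the argument is complete as written.
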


\begin{proof}
As \textsf{C} does not contain an orthogonal array, then by \lref{const} and without loss of generality, there is no constant row in \textsf{C} containing only the symbol 0. Suppose that by deleting any column from \textsf{C}, we obtain a \textsf{CA}$_{X}(v(v+1);2,k-1,v)$ that does contain an orthogonal array. Then, by deleting any column of \textsf{C}, we create a constant row containing the symbol 0. This means that for $0 \leqslant i \leqslant k-1$, there is a row $r$ of \textsf{C} such that $\textsf{C}[r,i] \neq 0$ and $\textsf{C}[r,j] = 0$ for $j \neq i$. However, since $k \geqslant 5$, this means there are at least three rows of \textsf{C} in which the symbols in columns 0 and 1 are both 0. Thus, the 2-way interaction $\{(0,0), (1,0)\}$ is covered at least three times which contradicts the fact that \textsf{C} covers every constant pair twice. Therefore, we must be able to delete some column of \textsf{C} to obtain a \textsf{CA}$_{X}(v(v+1);2,k-1,v)$ that does not contain an orthogonal array.
\end{proof}

The consequence of \lref{compoa} is that a complete catalogue of \textsf{CA}$_{X}(v(v+1);2,4,v)$ that do not contain an orthogonal array can be used to generate a complete catalogue of \textsf{CA}$_{X}(v(v+1);2,5,v)$ that do not contain an orthogonal array which in turn can be used to generate a complete catalogue of \textsf{CA}$_{X}(v(v+1);2,6,v)$ that do not contain an orthogonal array and so on. The key is to generate this initial catalogue. This may be achieved by using a method similar to the computations in \sref{ca2} but instead adding two (or more) placements for each symbol to the unique \textsf{CA}$_{X}(v(v+1);2,2,v)$. For each symbol, each of these placements would correspond to the rows in which that symbol would appear in one of the new columns being added. The placements chosen for each symbol have to intersect in exactly two rows to ensure that each constant pair in the new columns are covered exactly twice. For two distinct symbols, placements corresponding to the same column must be disjoint so that exactly one symbol appears in every entry of the resulting \textsf{CA}$_{X}$. The placement corresponding to one symbol in one column and the placement corresponding to the other symbol in a different column must intersect in one row to ensure every non-constant pair in the new columns is covered exactly once.

Furthermore, to ensure that the arrays we build do not contain orthogonal arrays, we can without loss of generality ensure that these arrays do not contain a constant row of 0s. This can be done by choosing the placements for the symbol 0 such that for each of the two constant rows containing 0s in the \textsf{CA}$_{X}(v(v+1);2,2,v)$, there is a placement avoiding that row.

The bound $k \geqslant 5$ in \lref{compoa} is best possible in the sense that there does exist a \textsf{CA}$_{X}(v(v+1);2,4,v)$ that does not contain an orthogonal array such that removing any column from this array gives a \textsf{CA}$_{X}(v(v+1);2,3,v)$ that does contain an orthogonal array. An example of such an array is the \textsf{CA}$_{X}(6;2,4,2)$ in \fref{cax242}. This array contains a constant row of 0s but no constant row of 1s. However, deleting any column will form a constant row of 1s. Another example is the \textsf{CA}$_{X}(42; 2,4,6)$ in \fref{cax644}. In this array, there is a constant row for all symbols except 5. The rows 0555, 5155, 5505 and 5552 ensure that the deletion of any column creates a constant row of 5s. Thus, it is necessary that we start with a catalogue of arrays with four columns.

\begin{figure}
\centering
\begin{tabular}{c c c c | c c c c | c c c c}
0 & 0 & 0 & 0 & 1 & 0 & 2 & 2 & 2 & 0 & 4 & 5\\
0 & 0 & 1 & 1 & 1 & 1 & 1 & 1 & 2 & 1 & 2 & 1\\
0 & 1 & 0 & 2 & 1 & 1 & 3 & 3 & 2 & 2 & 2 & 2\\
0 & 2 & 2 & 0 & 1 & 2 & 0 & 1 & 2 & 2 & 5 & 4\\
0 & 3 & 3 & 4 & 1 & 3 & 5 & 0 & 2 & 3 & 0 & 3\\
0 & 4 & 4 & 3 & 1 & 4 & 1 & 5 & 2 & 4 & 3 & 2\\
0 & 5 & 5 & 5 & 1 & 5 & 4 & 4 & 2 & 5 & 1 & 0\\
3 & 0 & 5 & 3 & 4 & 0 & 0 & 4 & 5 & 0 & 3 & 0\\
3 & 1 & 1 & 4 & 4 & 1 & 4 & 0 & 5 & 1 & 5 & 5\\
3 & 2 & 4 & 2 & 4 & 2 & 3 & 5 & 5 & 2 & 1 & 3\\
3 & 3 & 2 & 5 & 4 & 3 & 1 & 2 & 5 & 3 & 4 & 1\\
3 & 3 & 3 & 3 & 4 & 4 & 4 & 4 & 5 & 4 & 2 & 4\\
3 & 4 & 0 & 0 & 4 & 4 & 5 & 1 & 5 & 5 & 0 & 5\\
3 & 5 & 3 & 1 & 4 & 5 & 2 & 3 & 5 & 5 & 5 & 2
\end{tabular}
\caption{\label{cax644} A \textsf{CA}$_{X}(42;2,4,6)$. Although it does not contain an orthogonal array, the deletion of any column forms a \textsf{CA}$_{X}(42;2,3,6)$ that does contain an orthogonal array.}
\end{figure}

We conclude this section by returning to the unique \textsf{CA}$_{X}(42;2,5,6)$ in \fref{fig:design}. An \textsf{OA}$(2,5,6)$ does not exist so this excess coverage array cannot contain an orthogonal array. This is confirmed by \lref{const} and the fact that the excess coverage array has no constant rows. We found that this excess coverage array has an automorphism group isomorphic to the alternating group $A_{4}$. This group has two orbits on the columns of the array: one contains just the first column, and the other contains the remaining columns. Up to isomorphism, there are two possible arrays we can obtain by deleting a column of this $\textsf{CA}_{X}(42; 2,5,6)$: we can either delete the column that is in its own orbit or we can delete one of the other four. As there is no \textsf{OA}$(2,4,6)$, neither of these arrays contains an orthogonal array. Similarly, there are two possible arrays we can obtain by deleting two columns. Either we delete two of the columns in the orbit of size 4 or we delete one of these columns and the column in its own orbit. The second of these arrays contains an \textsf{OA}$(2,3,6)$. This orthogonal array can be constructed by deleting the first 2 columns from the array in \fref{fig:design} and then for each symbol, removing a constant row that is necessarily formed from this column deletion. By treating the rows of this orthogonal array as triples containing a row, column and symbol, we can form a $6\times 6$ Latin square. This Latin square, shown in \fref{latin}, has the highest number of transversals of any $6\times 6$ Latin square.

\begin{figure}
\begin{center}
\begin{tabular}{c c c c c c}
0 & 2 & 3 & 5 & 4 & 1\\
2 & 3 & 0 & 4 & 1 & 5\\
1 & 0 & 4 & 3 & 5 & 2\\
3 & 5 & 2 & 1 & 0 & 4\\
5 & 4 & 1 & 0 & 2 & 3\\
4 & 1 & 5 & 2 & 3 & 0
\end{tabular}
\caption{\label{latin} Latin square obtained by deleting 2 columns from the array in Figure~\ref{fig:design}}
\end{center}
\end{figure}


\section{Binary covering arrays}\label{binary}

In the previous sections, we have been concerned with excess coverage arrays with strength 2. We can build such arrays using \conref{construction} with $a = t-2$. However, \conref{construction} can be applied for any $1 \leqslant a \leqslant t-1$ so we are not bound to looking solely at strength 2 excess coverage arrays. In this section, we consider excess coverage arrays with $v=2$. That is, \textsf{CA}$_{X}(N;t,k,2)$. These arrays may be built by letting $a=1$ in \conref{construction}. In particular, we study these arrays to see whether they may provide additional insight into Levenshtein's conjecture and the existence of an \textsf{SCA}$(t!;t,v)$. We consider rows of these arrays as being elements of $\{0,1\}^{k}$. For $u \in \{0,1\}^{k}$, the \textit{weight} of $u$ is the number of 1s in $u$.

\begin{lemma}
If \textup{\textsf{C}} is a \textup{\textsf{CA}}$_{X}(N;t,k,2)$ such that \textup{\textsf{C}} covers each $t$-way interaction $T$ exactly $\mu(T)$ times, then $N = (t+1)!$.
\end{lemma}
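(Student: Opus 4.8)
The plan is to prove this by a double count of the incidences between the rows of $\textsf{C}$ and the $t$-way interactions that they cover. Throughout I assume $k \geqslant t$, which is implicit in the set-up: if $k < t$ there are no $t$-way interactions at all and the hypothesis is vacuous.

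First I would count from the side of the rows. Fix a row $\rho$ of $\textsf{C}$, regarded as a vector $u \in \{0,1\}^k$. For each of the $\binom{k}{t}$ choices of a set of $t$ distinct columns $\{c_0, \dots, c_{t-1}\}$, there is exactly one $t$-way interaction on those columns that $\rho$ covers, namely the one with $\nu_i = u_{c_i}$ for all $i$. Hence $\rho$ covers exactly $\binom{k}{t}$ interactions, and the total number of (row, covered interaction) incidences is $N\binom{k}{t}$.

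Next I would count from the side of the interactions. By hypothesis each $t$-way interaction $T$ is covered exactly $\mu(T)$ times, so the total number of incidences is $\sum_T \mu(T)$, summed over all $t$-way interactions. Grouping these interactions according to their underlying set of $t$ columns (there are $\binom{k}{t}$ such sets, each supporting $2^t$ interactions), it suffices to evaluate $\sum \mu(T)$ over the interactions on a single fixed column set. Writing $j = \vert\tau_0(T)\vert$, so that $\vert\tau_1(T)\vert = t-j$ and $\mu(T) = j!\,(t-j)!$, and noting there are $\binom{t}{j}$ interactions on the fixed column set with this value of $j$, this sum is $\sum_{j=0}^{t}\binom{t}{j}j!\,(t-j)! = \sum_{j=0}^{t} t! = (t+1)!$. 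Therefore the total number of incidences also equals $\binom{k}{t}(t+1)!$.

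Equating the two counts gives $N\binom{k}{t} = \binom{k}{t}(t+1)!$, and dividing by $\binom{k}{t}$ (nonzero since $k \geqslant t$) yields $N = (t+1)!$. I do not expect a real obstacle here: the argument is a clean incidence count, and the only point needing a moment's care is the identity $\sum_{j=0}^{t}\binom{t}{j}j!\,(t-j)! = (t+1)!$, which holds because each summand equals $t!$ and there are $t+1$ of them. (The same grouping by multiplicity vector shows more generally that a $\textsf{CA}_X(N;t,k,v)$ meeting its coverage requirements with equality has $N = t!\binom{t+v-1}{v-1}$, which recovers $N = v(v+1)$ when $t=2$ and $N = (t+1)!$ when $v=2$.)
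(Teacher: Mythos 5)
Your proof is correct and is essentially the paper's argument: the paper simply fixes a single set of $t$ columns, observes that each row covers exactly one interaction on that set, and evaluates the same sum $\sum_{j=0}^{t}\binom{t}{j}j!\,(t-j)! = (t+1)!$ directly, so your extra layer of summing over all $\binom{k}{t}$ column sets and then cancelling that factor is harmless but unnecessary. The core identity and the grouping by $\lvert\tau_0(T)\rvert$ are identical to the paper's.
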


\begin{proof}
Let $T = \{(c_{i}, \nu_{i}) : 0 \leq i \leq t-1\}$ be a $t$-way interaction with $\vert \tau_{0}(T) \vert = j$. Then $\mu(T) = j ! (t - j)!$. For $t$ distinct columns, there are $\binom{t}{j}$ interactions on those columns with $\vert \tau_{0}(T) \vert = j$. Therefore,
\begin{equation*}
N = \sum_{j=0}^{t} \binom{t}{j} j!(t-j)! = \sum_{j=0}^{t} t! = (t+1)!. \qedhere
\end{equation*} 
\end{proof}

We next analyse and count the number of \textsf{CA}$_{X}((t+1)!;t,t+1,2)$.

\begin{lemma}
Let $\textup{\textsf{C}}$ be a \textup{\textsf{CA}}$_{X}((t+1)!;t,t+1,2)$. For $0 \leqslant i \leqslant t + 1$, there is a constant $x_{i}$ such that each element in $\{0,1\}^{t+1}$ with weight $i$ appears as a row of $\textup{\textsf{C}}$ exactly $x_{i}$ times. Furthermore,
\begin{equation} 
x_{i} + x_{i + 1} = i!(t - i)!, \text{ for } 0 \leq i \leq t. \label{e:eq1}
\end{equation}
\end{lemma}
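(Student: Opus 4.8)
The plan is to exploit the rich symmetry of the situation: a \textsf{CA}$_X((t+1)!;t,t+1,2)$ has $t+1$ columns, and the covering condition is symmetric under permuting those columns and under swapping the two symbols. First I would argue the existence of the constants $x_i$. Fix a weight $i$ and two rows $u,u' \in \{0,1\}^{t+1}$ of weight $i$. Since they have the same weight, there is a column permutation $\pi \in \mathcal{S}_{t+1}$ with $\pi(u)=u'$. Applying $\pi$ to the columns of $\textsf{C}$ sends it to another \textsf{CA}$_X((t+1)!;t,t+1,2)$ — here one must check that permuting the \emph{columns} (unlike permuting symbols in a single column) preserves the excess-coverage condition, which is immediate because $\mu(T)$ depends only on the partition of the column-set of $T$ induced by the symbols, not on which columns are which. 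But more is needed: I actually want the multiplicity of $u$ in $\textsf{C}$ to equal that of $u'$ in $\textsf{C}$ itself, not merely in $\pi(\textsf{C})$. For this I would instead count directly. Let $m_u$ denote the multiplicity of row $u$ in $\textsf{C}$. I claim $\sum_{u \in \{0,1\}^{t+1}} m_u = (t+1)!$ and, crucially, that the multiplicities satisfy enough linear constraints to force them to depend only on weight.

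The cleaner route is via \eref{e:eq1} itself, which I would prove \emph{before} establishing the $x_i$. Consider the $t$-way interaction $T$ obtained by deleting one column, say column $t+1$: take $T = \{(j, \nu_j) : 0 \le j \le t-1\}$, indexing the remaining columns $0,\dots,t-1$, and suppose $|\tau_0(T)| = i$, so $T$ has $i$ columns carrying symbol $0$ and $t-i$ carrying symbol $1$. By \lref{t:icax} (applied to the \textsf{SCA} underlying $\textsf{C}$, or directly from the defining property that $\textsf{C}$ covers $T$ exactly $\mu(T)$ times), $T$ is covered by exactly $\mu(T) = i!\,(t-i)!$ rows of $\textsf{C}$. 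Now a row covers $T$ if and only if, on columns $0,\dots,t-1$, it agrees with the prescribed pattern; on the remaining column $t+1$ it is either $0$ or $1$. The rows covering $T$ with a $0$ in column $t+1$ are exactly the copies of the specific weight-$i$ vector $u_0$ (with $0$s on $\tau_0(T)\cup\{t+1\}$, $1$s elsewhere), and those with a $1$ in column $t+1$ are exactly the copies of the specific weight-$(i+1)$ vector $u_1$. Hence
\begin{equation*}
m_{u_0} + m_{u_1} = i!\,(t-i)!.
\end{equation*}

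Finally I would bootstrap this local relation into the full statement. Running the argument above with a fixed weight-$i$ vector $u_0$ but varying which column we single out (it must be one of the $t+1-i$ columns where $u_0$ has a $1$, reading $u_0$ as obtained from $u_1$ by flipping that column) shows that $m_{u_0} + m_{u_1}$ is the \emph{same} value $i!\,(t-i)!$ for all weight-$(i+1)$ neighbours $u_1$ of $u_0$; symmetrically for neighbours of a fixed $u_1$. So on the Hamming graph restricted to consecutive weight levels, any two vectors at the same level that share a common neighbour have equal multiplicity; since this adjacency-of-common-neighbours relation connects all vectors of a fixed weight $i$ (for $1 \le i \le t$ this is standard — the Johnson graph is connected — and the extreme weights $0,1,t,t+1$ are handled by the single relations at those levels together with $N=(t+1)!$), every vector of weight $i$ has the same multiplicity $x_i$, and \eref{e:eq1} follows by substituting $x_i$ for $m_{u_0}$ and $x_{i+1}$ for $m_{u_1}$. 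The main obstacle is the connectivity bookkeeping: checking that the ``same common neighbour'' relation genuinely links all weight-$i$ vectors (including the boundary cases $i \in \{0,1,t,t+1\}$, where one leans on the total count $N = (t+1)!$ from the previous lemma) and making sure no off-by-one slips in when translating ``delete column $c$'' into the flip between $u_0$ and $u_1$.
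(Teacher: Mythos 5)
Your proposal is correct and its core is identical to the paper's: both rest on the observation that a $t$-way interaction $T$ obtained by prescribing all but one column is covered by exactly two vectors of $\{0,1\}^{t+1}$, one of weight $i$ and one of weight $i+1$, so their multiplicities sum to $\mu(T)=i!\,(t-i)!$. The only divergence is in how this local relation is globalized: the paper inducts on weight starting from the unique weight-$0$ vector, and well-definedness of $x_{i+1}$ falls out at once because $m_u=i!\,(t-i)!-x_i$ does not depend on the choice of weight-$(i+1)$ vector $u$; you instead invoke connectivity of the common-neighbour relation (the Johnson graph) within each weight level, which works but is heavier machinery than needed, and your worry about the boundary weights $1$ and $t$ is unfounded since $J(t+1,i)$ is connected for all $1\le i\le t$ (weights $0$ and $t+1$ being single vertices). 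One loose end, shared with the paper's own write-up: both arguments use that $T$ is covered \emph{exactly} $\mu(T)$ times, whereas the definition only demands ``at least''; this does follow from $N=(t+1)!$ by summing coverage over the $2^{t}$ interactions on a fixed set of $t$ columns, but your appeal to an ``underlying \textsf{SCA}'' is not a valid justification, as \textsf{C} need not arise from one.
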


\begin{proof}
We proceed by induction on $i$. As there is a single element of $\{0,1\}^{t+1}$ with weight 0, we can take $x_{0}$ to be the number of weight 0 rows in \textsf{C}. Let $x_{i}$ be the number of times each element of $\{0,1\}^{t+1}$ with weight $i$ appears as a row of $\textsf{C}$. Let $u \in \{0,1\}^{t + 1}$ have weight $i+1$ and let $T$ be a $t$-way interaction with $\vert \tau_{1}(T) \vert = i$ such that $u$ covers $T$. Let $w$ be the unique vector in $\{0,1\}^{t+1}$ with weight $i$ covering $T$. Then $u$ and $w$ are the only vectors in $\{0,1\}^{t+1}$ that cover $T$. As $\vert \tau_{1}(T) \vert = i$, we must have $\mu(T) = i!(t - i)!$. As there are $x_{i}$ rows in $\textsf{C}$ equal to $w$, there must be $i!(t - i)! - x_{i}$ rows equal to $u$. Therefore, each element of $\{0,1\}^{t+1}$ with weight $i+1$ appears as a row of $\textsf{C}$ $x_{i+1}$ times where $x_{i+1} = i!(t - i)! - x_{i}$.
\end{proof}

\begin{theorem}
The number of distinct \textup{\textsf{CA}}$_{X}((t+1)!; t, t + 1, 2)$ is exactly $\lfloor \frac{t}{2} \rfloor ! \lceil \frac{t}{2} \rceil ! + 1$ for $t \geq 1$. When $t$ is odd, no two of these arrays are isomorphic but when $t$ is even, there are $\lfloor \frac{t}{2} \rfloor ! \lceil \frac{t}{2} \rceil ! / 2$ isomorphism classes each containing two arrays and one isomorphism class containing a single array.
\end{theorem}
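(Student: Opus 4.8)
The plan is to build on the preceding lemma, which shows that a $\textsf{CA}_{X}((t+1)!;t,t+1,2)$ is completely determined by its \emph{weight profile} $(x_{0},\dots,x_{t+1})$ (here $x_{i}$ is the common multiplicity of a weight-$i$ vector among the rows), and that this profile satisfies $x_{i}+x_{i+1}=a_{i}$ for $0\le i\le t$, where I write $a_{i}:=i!(t-i)!$. Distinct profiles yield distinct arrays since the profile is read off from the row multiset, so the first part of the theorem reduces to counting nonnegative integer solutions of this recurrence. I would first record the converse direction: any nonnegative integer sequence obeying $x_{i}+x_{i+1}=a_{i}$ actually arises from a genuine $\textsf{CA}_{X}((t+1)!;t,t+1,2)$, because a $t$-way interaction $T$ with $|\tau_{1}(T)|=i$ is covered by exactly the two binary vectors of weights $i$ and $i+1$ extending it, hence by $x_{i}+x_{i+1}=a_{i}=\mu(T)$ rows, and a count of interactions on any fixed $t$ columns forces the number of rows to be $(t+1)!$.

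To count solutions, put $s=\lfloor t/2\rfloor$ and use $x_{s}$ as a pivot: the recurrence determines the whole profile from $x_{s}$ going both up and down, and every profile satisfies $x_{s}\ge 0$ together with $x_{s}\le a_{s}$ (from $x_{s+1}=a_{s}-x_{s}\ge 0$). I would then prove the converse bound: every $x_{s}\in\{0,1,\dots,a_{s}\}$ extends to an all-nonnegative profile. The key structural fact is that $a_{j}>a_{j+1}$ exactly when $j<(t-1)/2$, so the symmetric sequence $(a_{j})_{j=0}^{t}$ is ``valley-shaped'': non-increasing on $\{0,\dots,s\}$, non-decreasing on $\{s,\dots,t\}$, with minimum $a_{s}=a_{t-s}=\lfloor t/2\rfloor!\,\lceil t/2\rceil!$. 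A short induction going up from $s$ gives $0\le x_{s+k}\le a_{s+k-1}\le a_{s+k}$ for $k\ge 1$, and one going down from $s$ gives $0\le x_{s-k}\le a_{s-k}$ for $k\ge 0$; in particular all $x_{i}\ge 0$. Hence the profiles, equivalently the arrays, are in bijection with $\{0,\dots,a_{s}\}$, giving exactly $\lfloor t/2\rfloor!\,\lceil t/2\rceil!+1$ of them.

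For the isomorphism statement, permuting columns preserves the weight profile while the one nontrivial symbol permutation (the transposition of $0$ and $1$) sends a weight-$i$ vector to a weight-$(t+1-i)$ vector, so it reverses the profile, $(x_{0},\dots,x_{t+1})\mapsto(x_{t+1},\dots,x_{0})$; thus isomorphism classes correspond to orbits of this reversal on the set of profiles. Writing each profile as $x_{i}=(-1)^{i}x_{0}+S_{i}$ with $S_{i}=\sum_{j=0}^{i-1}(-1)^{j}a_{j}$, reversal becomes an affine involution of the parameter $x_{0}$; evaluating $S_{t+1}=\sum_{j=0}^{t}(-1)^{j}a_{j}$ by pairing the term $j$ with the term $t-j$ and using $a_{j}=a_{t-j}$ shows $S_{t+1}=0$ when $t$ is odd and $S_{t+1}=(-1)^{t/2}(\lfloor t/2\rfloor!)^{2}+(\text{an even integer})$ when $t$ is even. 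When $t$ is odd this forces reversal to fix every profile, so all $\lfloor t/2\rfloor!\,\lceil t/2\rceil!+1$ arrays lie in distinct isomorphism classes. When $t$ is even, reversal acts on the pivot by $x_{s}\mapsto a_{s}-x_{s}$ on $\{0,\dots,a_{s}\}$; this has a unique fixed value iff $a_{s}=(\lfloor t/2\rfloor!)^{2}$ is even, i.e.\ iff $t\ge 4$, and then the remaining $a_{s}$ profiles split into $a_{s}/2=\lfloor t/2\rfloor!\,\lceil t/2\rceil!/2$ pairs, yielding the stated structure; the small case $t=2$, where $a_{s}=1$ is odd, is handled separately, the two arrays forming a single orbit.

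I expect the main obstacle to be the verification in the second paragraph: making the upward and downward inductions airtight, and in particular being careful at the bottom of the valley when $t$ is odd (where $a_{s}=a_{s+1}$), so that $[0,a_{s}]$ is shown to be exactly the feasible range for the pivot $x_{s}$ and no solutions are missed or double-counted.
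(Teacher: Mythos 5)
Your proposal is correct and follows essentially the same route as the paper: reduce to counting weight profiles, enumerate the non-negative solutions of $x_{i}+x_{i+1}=i!(t-i)!$ by pivoting at $x_{\lfloor t/2\rfloor}$ and using the unimodality of $i!(t-i)!$, and then analyse the $0\leftrightarrow 1$ swap as reversal of the profile. Your explicit treatment of $t=2$ is in fact slightly more careful than the paper's argument, whose stated class structure for even $t$ (exactly one singleton class) requires $(t/2)!\,(t/2)!$ to be even and hence only holds for $t\geq 4$.
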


\begin{proof}
From each non-negative integer solution $x_{0}, \dots, x_{t+1}$ to the system in \eqref{e:eq1}, we can build a \textsf{CA}$_{X}((t+1)!; t, t+1, 2)$ by adding $x_{i}$ rows corresponding to each vector in $\{0,1\}^{t+1}$ with weight $i$. Therefore, the number of distinct \textsf{CA}$_{X}((t+1)!;t,t+1,2)$ is equal to the number of non-negative integer solutions to \eqref{e:eq1}.

Let $s = \lfloor t/2 \rfloor$. Now consider $x_{s}$. By \eqref{e:eq1}, $x_{s} + x_{s + 1} = \lfloor \frac{t}{2} \rfloor ! \lceil \frac{t}{2} \rceil !$. Let $x_{s}$ be some integer between 0 and $\lfloor \frac{t}{2} \rfloor ! \lceil \frac{t}{2} \rceil !$. Then, in order to satisfy each equation in \eqref{e:eq1}, fixing $x_{s}$ fixes the values for all other $x_{i}$. Suppose $0 \leq i \leq s-1$. Then $i!(t - i)! > (i + 1)!(t - (i + 1))!$. Therefore, if $x_{i + 1}$ is a non-negative integer that is at most $(i + 1)!(t - (i + 1))!$, then $x_{i}$ is a non-negative integer that is at most $i!(t - i)!$. As $x_{s}$ is a non-negative integer that is at most $\lfloor \frac{t}{2} \rfloor ! \lceil \frac{t}{2} \rceil !$, then $x_{i}$ is a non-negative integer for all $i \in \{0,\dots, s\}$. Similarly, $x_{s + 1}$ is a non-negative integer that is at most $\lfloor \frac{t}{2} \rfloor ! \lceil \frac{t}{2} \rceil !$ and $i!(t - i)! < (i + 1)!(t - (i + 1))!$ for $s+1 \leqslant i \leqslant t - 1$, so $x_{i}$ is a non-negative integer for all $i \in \{s+1,\dots,t+1\}$. Thus, each non-negative integer choice for $x_{s}$ gives rise to a non-negative integer solution to \eqref{e:eq1} and thus, a \textsf{CA}$_{X}((t+1)!;t,t + 1, 2)$. Therefore, the number of \textsf{CA}$_{X}((t+1)!;t,t + 1, 2)$ is exactly $\lfloor \frac{t}{2} \rfloor ! \lceil \frac{t}{2} \rceil ! + 1$.

Now we can consider isomorphisms of these arrays. Let \textsf{C} be a \textsf{CA}$_{X}((t+1)!;t,t+1,2)$. Since each vector in $\{0,1\}^{t+1}$ with the same weight appears the same number of times in $\textsf{C}$, permuting the columns of $\textsf{C}$ will not generate a new array. The only other possible isomorphism to consider then is swapping all 1s and 0s in $\textsf{C}$. By \eqref{e:eq1}, 
\begin{equation}\label{e:eq3}
x_{i} + x_{i + 1} = x_{t+1-i} + x_{t-i}.
\end{equation}
Suppose $t$ is odd. Then, $x_{(t+1)/2 - 1} + x_{(t+1)/2} = x_{(t+1)/2} + x_{(t+1)/2 + 1}$. Thus, $x_{(t+1)/2 - 1} = x_{(t+1)/2 + 1}$ and so $x_{i} = x_{t + 1 - i}$ for $0 \leq i \leq (t+1)/2$. Hence, swapping 1s and 0s in a \textsf{CA}$_{X}((t+1)!; t,t + 1, 2)$ when $t$ is odd will not generate a new array. Therefore, the $\lfloor \frac{t}{2} \rfloor ! \lceil \frac{t}{2} \rceil ! + 1$ possible arrays with those parameters must be in distinct isomorphism classes.

Now suppose $t$ is even. Then it can be shown by induction on $i$ and using \eqref{e:eq3} that $x_{t/2 - i} = x_{t/2 + 1 + i} + (-1)^{i}(x_{t/2} - x_{t/2 + 1})$ for $0 \leq i \leq t/2$. In particular, $x_{i} = x_{t+1 - i}$ for $0 \leq i \leq t+1$ if and only if $x_{t/2} = x_{t/2 + 1}$. Thus, if $x_{t/2} \neq x_{t/2+1}$ swapping 1s and 0s will generate a new array. Therefore, when $t$ is even, there are $\lfloor \frac{t}{2} \rfloor ! \lceil \frac{t}{2} \rceil ! / 2$ isomorphism classes of \textsf{CA}$_{X}((t+1)!; t,t+1,2)$ each containing 2 arrays and 1 isomorphism class containing a single array.
\end{proof}

To conclude this section, we consider whether analysing the existence of \textsf{CA}$_{X}((t+1)!; t,k,2)$ may lead to improvements to the upper bound of Chee et al.~\cite{Chee13} that the number of symbols in an \textsf{SCA}$(t!;t,v)$ is at most $2t - 1$. Suppose an \textsf{SCA}$(t!;t,2t-1)$ exists. Then from this array, we can obtain a \textsf{CA}$_{X}(t!; t-1,2t-2,2)$. That is, we obtain an excess coverage array with 2 symbols and where the number of columns is twice the strength. Moreover, this array will cover every interaction $T$ exactly $\mu(T)$ times. Therefore, if we can establish the non-existence of a \textsf{CA}$_{X}((t+1)!; t,2t,2)$, for some value of $t$, then we can improve upon the bound of Chee et al.

In building a \textsf{CA}$_{X}((t+1)!; t,2t,2)$ we essentially need to decide how many times each vector in $\{0,1\}^{2t}$ will appear in the array. For a $t$-way interaction $T$, there are $2^{t}$ vectors in $\{0,1\}^{2t}$ that cover $T$. The number of times these vectors collectively appear in a \textsf{CA}$_{X}((t+1)!; t,2t,2)$ must be $\mu(T)$. If we apply this logic to all possible $t$-way interactions, we can obtain a system of $\binom{2t}{t}2^{t}$ equations with $2^{2t}$ variables. A non-negative integer solution to this system can be used to generate a \textsf{CA}$_{X}((t+1)!; t,2t,2)$ by adding the appropriate number of each vector as specified by the solution.

We can simplify this system by only considering arrays in which vectors with the same weight appear the same number of times. Note that this restriction is not necessary as it was in the case where $k = t+1$. By making this restriction, we reduce the number of variables to $2t + 1$, one for each possible weight, and the number of equations to $t+1$ since all interactions with the same value of $\vert \tau_{1} \vert$ will generate the same equation. Specifically, if $\vert \tau_{1}(T) \vert = i$ for some interaction $T$, then there are $\binom{t}{j}$ vectors with weight $i+j$ that cover $T$. Let $x_{i}$ be the number of times each vector with weight $i$ appears in the array. Then, the relevant system of equations is
\begin{equation}
\sum_{j=0}^{t} \binom{t}{j} x_{i+j} = i!(t-i)!, \text{ for } 0 \leq i \leq t. \label{eq:2}
\end{equation}

We have found non-negative integer solutions to \eqref{eq:2} for all $t \leq 17$. Therefore, a $\textsf{CA}_{X}((t+1)!;t,2t,2)$ exists for all $t \leqslant 17$. These arrays mean that at present, we cannot disprove the existence of an \textsf{SCA}$(t!;t,2t-1)$ for $t \leqslant 18$. We found that no non-negative integer solution to \eqref{eq:2} exists for $t = 18$. However, this does not necessarily rule out the existence of a \textsf{CA}$_{X}(19!;18,36,2)$. Hence, we are also at present unable to disprove the existence of an \textsf{SCA}$(19!;19,37)$.

\section{Conclusion}\label{concl}

In closing, we highlight some open problems in the area. Perhaps the most obvious is the resolution of Levenshtein's conjecture for $t = 7$. By \tref{t:sca79}, $v = 9$ is the only value of $v$ for which the existence of an \textsf{SCA}$(7!;7,v)$ is unresolved.

Less is known about the conjecture for $t \geqslant 8$. In an attempt to address the $t = 8$ case, we briefly tried computing \textsf{CA}$_{X}(56;2,k,7)$ however we found the large number of possible placements in the \textsf{CA}$_{X}(56;2,2,7)$ a significant barrier to these efforts. Moreover, given the sharp increase in the number of \textsf{CA}$_{X}(v(v+1);2,3,v)$ and \textsf{CA}$_{X}(v(v+1);2,4,v)$ as $v$ increases shown in \Tref{caxtable}, we suspect the number of \textsf{CA}$_{X}(v(v+1);2,3,7)$ and \textsf{CA}$_{X}(v(v+1);2,4,7)$ may be so large as to render our method of generating exhaustive catalogues difficult.

We believe excess coverage arrays are interesting objects in their own right. One potential problem may be to consider whether a \textsf{CA}$_{X}(v(v+1);2,v+2,v)$ exists for any value of $v$ other than $v = 2$.

\section*{Acknowledgements}
The first author was supported by an Australian Government Research and Training Program Scholarship and a Monash University Postgraduate Publication Award.


\end{document}